\documentclass[final]{siamltex}
\usepackage{amsmath, amssymb, graphicx, epstopdf, algpseudocode, algorithm}
%
\newtheorem{remark}{Remark}

\newcommand{ \rn }[1]{\mathbb{R}^{#1}}

\title{Numerical Recovery of Source Singularities via the Radiative Transfer Equation with Partial Data \thanks{This work was supported by NSF grant DMS-0838212.}}

\author{Mark Hubenthal\thanks{Department of Mathematics and Statistics, University of Jyv\"askyl\"a ({\tt john.m.hubenthal@jyu.fi}) and Department of Mathematics, University of Washington}}

\begin{document}
\maketitle
\begin{abstract}The inverse source problem for the radiative transfer equation is considered, with partial data. Here we demonstrate numerical computation of the normal operator $X_{V}^{*}X_{V}$ where $X_{V}$ is the partial data solution operator to the radiative transfer equation. The numerical scheme is based in part on a forward solver designed by F. Monard and G. Bal. We will see that one can detect quite well the visible singularities of an internal optical source $f$ for generic anisotropic $k$ and $\sigma$, with or without noise added to the accessible data $X_{V}f$. In particular, we use a truncated Neumann series to estimate $X_{V}$ and $X_{V}^{*}$, which provides a good approximation of $X_{V}^{*}X_{V}$ with an error of higher Sobolev regularity. This paper provides a visual demonstration of the authors' previous work in recovering the microlocally visible singularities of an unknown source from partial data. We also give the theoretical analysis necessary to justify the smoothness of the remainder when approximating the normal operator.\end{abstract}

\begin{keywords}
radiative transfer equation, microlocal analysis, optical molecular imaging, inverse problems, partial data
\end{keywords}

\begin{AMS}
35R30, 35S05, 35R09, 35Q20, 92C55
\end{AMS}

\pagestyle{myheadings}
\thispagestyle{plain}
\markboth{M. HUBENTHAL}{NUMERICAL RECOVERY OF SOURCE SINGULARITIES}

\section{Introduction}
The Radiative Transfer Equation (also sometimes referred to as the linear transport or linear Boltzmann equation) is often used to model the propagation of particles that exhibit absorption and scattering in various contexts, including the behavior of photons within biological tissues or of neutrons in a reactor. In this paper, we will consider an inverse problem relevant to optical molecular imaging (OMI), which is a physically safe imaging modality that has seen some recent advances in research. In this application, biochemical markers can be used to detect the presence of specific molecules or genes, and suitably designed markers could potentially identify diseases before phenotypical symptoms even appear. Such markers are typically light-emitting molecules, such as fluorophores or luminophores, which bind to the desired molecule to be detected. In contrast to Single Positron Emission Computed Tomography (SPECT),  Positron Emission Tomography (PET), or Magnetic Resonance Imaging (MRI), optical markers emit low-energy near-infrared photons that are relatively harmless to human tissue. Because of their low-energy level, the photons scatter before they are measured. Further specifics can be found in the bioengineering literature such as \cite{be1,be2,be3, weissleder1,weissleder2,ntziachristos}. 

The inverse problem we consider consists of reconstructing the spatial distribution of a radiative light source from measurements of outgoing photon intensities at the boundary of the medium. In many applications, the propagation of photons emitted can be modeled as inverse source problems of time-independent radiative transfer equations. We will assume that the optical parameters of the underlying medium are known, so that the problem of determining the source is feasible. In practice, such optical parameters do not vary too much in biological tissue, and not knowning their true values does not significantly effect one's ability to detect edges in the image. It is shown in \cite{inversesource} that under mild assumptions on the scattering and absorption parameters of the medium this is possible. Other work on the inverse source problem for the RTE under varying assumptions can be found in \cite{guillaume, guillaume2, sharafutdinov, panchenko, larsen}, and further background on optical tomography can be found in \cite{uhlmanot, arridge}. In the case of partial data, \cite{hubenthal} shows that one can recover the \textit{visible} singularities of the source $f$, which is a particular subset of $\mathrm{WF}(f)$. As such, the primary focus of this paper is to provide a numerical scheme by which to detect such singularities. We now describe more precisely the mathematical problem.

We assume $\Omega$ to be a bounded domain in $\rn{n}$ with smooth boundary and outer unit normal vector $\nu(x)$. As in \cite{inversesource,hubenthal}, we also assume that the data is given on the boundary of a larger strictly convex domain $\Omega_{1} \Supset \Omega$. The radiative transfer equation on the domain $\Omega$ with internal source $f$ and zero incoming illuminations is given by
\begin{align}
\theta \cdot \nabla_{x}u(x,\theta) + \sigma(x,\theta)u(x,\theta) - \int_{\mathbb{S}^{n-1}}k(x,\theta,\theta')u(x,\theta')\,d\theta' & = f(x), \notag\\
\quad u|_{\partial_{-}S\Omega} & = 0, \label{transport}
\end{align}
where the absorption $\sigma$ and the collision kernel $k$ are functions with regularity specified later, the solution $u(x,\theta)$ gives the intensity of photons at $x$ moving in the direction $\theta$, and $\partial_{\pm}S\Omega$ is the set of points $(x,\theta) \in \partial \Omega \times \mathbb{S}^{n-1}$ such that $\pm \nu(x) \cdot \theta > 0$. That is, $\partial_{\pm}S\Omega$ is the set of points $(x,\theta) \in \partial \Omega \times \mathbb{S}^{n-1}$ such that $\theta$ is pointing outward or inward, respectively. The source term $f$ will be assumed to depend on $x$ only for our purposes. We also recall, given the physical model, that equation (\ref{transport}) is only applicable at a single frequency, as the parameters $\sigma$ and $k$ typically vary widely with different frequencies. It is also important that the photons' energy be relatively low, since for high energy photons each scattering event is often accompanied by an energy change. This is discussed briefly in \cite{hubenthal}, and we mention it primarily to point out the limitations imposed by the assumptions of the model.

In the case of full data, we have boundary measurements
\begin{equation}
Xf(x,\theta) = u\vert_{\partial_{+}S\Omega}.
\end{equation}
In \cite{inversesource}, it is shown that for an open, dense set of absorption and scattering coefficients $(\sigma, k) \in C^{2}(\overline{\Omega} \times \mathbb{S}^{n-1}) \times C^{2}(\overline{\Omega} \times \mathbb{S}^{n-1} \times \mathbb{S}^{n-1})$, one can recover $f \in L^{2}(\Omega)$ uniquely from boundary measurements $Xf$ on all of $\partial_{+}S\Omega$. To set up the case of partial data, first let $V \subset \partial_{+}S\Omega$ be open and let $\widetilde{V} \Subset V$. Let $\chi_{V}\in C_{0}^{\infty}(\partial_{+}S\Omega)$ be a smooth cutoff function such that $\chi_{V}(x,\theta) \equiv 1$ for $(x,\theta) \in \widetilde{V}$ and $\chi_{V}(x,\theta) \equiv 0$ for $(x,\theta) \notin V$. The boundary measurements are then given by 
\begin{equation}
X_{V}f(x,\theta) = \chi_{V}(x,\theta) u\vert_{\partial_{+}S\Omega}.
\end{equation}
To make notation a bit simpler, if $V = \partial_{+}S\Omega$ (complete data) we will just write $X$, since in this case $X_{V} = X$.

We now consider the complementary problem of numerical computation of solutions to (\ref{transport}) and using the accessible part of the synthethized data to visualize the results of \cite{hubenthal}. To this end, we employ a technique from \cite{monard} that uses rotations applied to the parameters in the spectral domain to help eliminate the ray effect, which is a byproduct of the discrete ordinates method. It should be noted that in principle such a method could be applied in any dimension, but for the actual computations we will restrict ourselves to the two dimensional case. We also refer the reader to \cite{hongkai} for another approach to solving the direct transport equation using finite element methods, which we do not use here.

Our main goal is to approximate the operators $X^{*}Xf$ and $X_{V}^{*}X_{V}f$ in the two-dimensional case. Ignoring the technical details for the moment, the idea is to utilize a Neumann series to approximate the forward solution operator $X_{V}$ and its adjoint $X_{V}^{*}$. As such, one must truncate the series to a finite number of terms when computing $X_{V}^{*}X_{V}$, and the difference of the approximation from the true function $X_{V}^{*}X_{V}f$ will have some (higher) Sobolev regularity. The only theoretical result herein is Lemma \ref{approxnormalsmooth}, which essentially asserts that, if $m_{1}$ terms are computed in the approximation of $X$ and $m_{2}$ terms are computed in the approximation of $X^{*}$, then $[X^{*}X]_{\mathrm{approx}}f - X^{*}Xf \in H^{l + m+1}(\Omega)$, where $m  = \min\{ m_{1},m_{2}\}$ and $f \in H^{l}(\Omega)$.

In \S \ref{statementofresults}, we review the background related to recovering the visible singularities of a source $f$ given partial data of its outgoing intensity on the boundary. This is summarized by Theorem 2.2 and Corollary 6.4 of \cite{hubenthal}. In \S \ref{numericalmethod} we describe the algorithm used to compute the forward solution $u$ to (\ref{transport}). Following, in \S \ref{ApproxNormal} we detail the similar approach used to approximate $X_{V}^{*}$. Finally, we present some numerical computations for a few different types of sources with and without noise added to $X_{V}f$ in \S \ref{examples}.  Section \ref{smoothnessanalysis} proves the main technical estimate of this work, and Appendix \ref{ap:compsing} provides some necessary results for iterated weakly singular-type integral operators.

\section*{Acknowledgements}The author wishes to thank Fran\c cois Monard for some enlightening conversations related to the computational aspects of this work, and to thank the referees for their valuable comments and suggestions. This research was primarily done while at the University of Washington. However, much of the work in Appendix \ref{ap:compsing} was completed during a visit to Finland under the advisement of Mikko Salo, with whom it was very informative to talk about this and related problems.

\section{Background and Statement of Main Lemma \label{statementofresults}}
Let us establish some standard notation. We set
\begin{equation}
T_{0} = \theta \cdot \nabla_{x}, \quad T_{1} = T_{0} + \sigma, \quad T = T_{0} + \sigma - K,
\end{equation}
where $\sigma$ denotes the operation of multiplication by $\sigma(x,\theta)$, and $K$ is defined by
\begin{equation}
Kf(x,\theta) = \int_{\mathbb{S}^{n-1}}k(x,\theta,\theta')f(x,\theta')\,d\theta'.
\end{equation}
We also recall from \cite{inversesource} that a particular left inverse for $T_{1}$ is given by
\begin{equation}
[T_{1}^{-1}f](x,\theta) := \int_{-\infty}^{0}\exp\left(  -\int_{s}^{0}\sigma(x + \tau \theta, \theta)\,d\tau \right)f(x + s\theta,\theta)\,ds, \label{t1inverse}
\end{equation}
which can be verified by noting that $E(x,\theta) = \exp\left( -\int_{\rn{+}} \sigma(x+s\theta,\theta)\,ds \right)$ is an integrating factor for $T_{1}$.

In order to compute the normal operator $X_{V}^{*}X_{V}f$, we first need to generate the solution to the radiative transport equation (\ref{transport}). For the purposes of the numerical implementation, we will assume that the Neumann series expansion of $(\mathrm{Id} - KT_{1}^{-1})^{-1}$ converges, which is the case when $\|KT_{1}^{-1}\|_{L^{2} \to L^{2}} < 1$. In this case we have
\begin{align*}
u &= T_{1}^{-1}(\mathrm{Id} - KT_{1}^{-1})^{-1}Jf\\
& = T_{1}^{-1}\sum_{m=0}^{\infty}(KT_{1}^{-1})^{m}Jf.
\end{align*}
If we set $u_{0}(x,\theta) := [T_{1}^{-1}f](x,\theta)$ and set $u_{l} := T_{1}^{-1}Ku_{l-1}$ for $l \geq 1$, then
$u = \sum_{l\geq 0}u_{l}$. In practice, it is quite simple to compute the scattering term $Ku_{l-1}$ at each iteration. In order to compute $T_{1}^{-1}$ we will use a simple first order Euler method to solve the associated differential equation to which $T_{1}^{-1}$ is the solution operator:
\begin{align*}
\theta \cdot \nabla_{x}u(x,\theta) + \sigma(x,\theta)u(x,\theta) & = g(x,\theta), \quad (x,\theta) \in \Omega \times \mathbb{S}^{n-1}\\
u \vert_{\partial_{-}S\Omega} & \equiv 0.
\end{align*}

For a given source $f \in L^{2}(\Omega)$, recall the \textit{microlocally visible set} corresponding to partial measurements on $\partial_{+}S\Omega_{1}$, given by
 \begin{equation}
\mathcal{M}' := \{ (x,\xi) \in T^{*}\Omega \, | \, \exists \theta \in \mathbb{S}^{n-1} \textrm{ such that } \theta \cdot \xi = 0 \textrm{ and } \chi_{V}^{\#}(x,\theta) \neq 0 \}.\label{microvisibleset}
\end{equation}
Here $\chi_{V}^{\#}(x,\theta)$ is the extension of $\chi_{V}:\partial_{+}S\Omega_{1} \to \mathbb{R}$ to $\Omega_{1} \times \mathbb{S}^{n-1}$ defined by $\chi_{V}^{\#}(x,\theta) = \chi_{V}(x + \tau_{+}(x,\theta)\theta, \theta)$.

It will be useful to define the so-called Sobolev wavefront sets of a given order $s$ (see \cite{quintoxraysurvey}).
\begin{definition}
Let $f \in \mathcal{D}'(\rn{n})$. Then $f$ is in $H^{s}$ \textit{microlocally near} $(x_{0},\xi_{0})$ if and only if there is a cut-off function $\psi \in \mathcal{D}(\rn{n})$ with $\psi(x_{0}) \neq 0$ and a function $u(\xi)$ homogeneous of degree zero and smooth on $\rn{n} \setminus 0$ and with $u(\xi_{0}) \neq 0$ such that the product $u(\xi) \widehat{(\psi f)}(\xi) \in L^{2}(\rn{n}, (1+|\xi|^{2})^{s})$. The $H^{s}$ \textit{wavefront set of } $f$, $\mathrm{WF}^{s}(f)$, is the complement of the set of $(x_{0},\xi_{0})$ near which $f$ is microlocally in $H^{s}$.\end{definition}

With this notation, the standard $C^{\infty}$ wavefront set can be written as $\mathrm{WF}^{\infty}(f)$. We now restate the main result of \cite{hubenthal} for recovering singularities of a source from partial data, albeit in a slightly more compact way using the $H^{s}$ wavefront sets defined above.
\begin{theorem}[\cite{hubenthal}, Theorem 2.2] Suppose $f \in L^{2}(\Omega)$ and let $l$ be a positive integer. There exists an open dense set $\mathcal{O}_{l}$ of pairs $(\sigma, k) \in  C^{\infty}(\overline{\Omega} \times \mathbb{S}^{n-1}) \times  C^{\infty}(\overline{\Omega}_{x} \times \mathbb{S}_{\theta'}^{n-1} \times \mathbb{S}_{\theta}^{n-1})$ depending on $l$ such that given $(\sigma, k) \in \mathcal{O}_{l}$, (\ref{transport}) has a unique solution $u \in L^{2}$, and
\begin{equation}
(z,\xi) \notin \mathrm{WF}^{\infty}(X_{V}^{*}X_{V}f) \Longrightarrow (z,\xi) \notin  \mathrm{WF}^{l}(f), \quad \forall (z,\xi) \in \mathcal{M}'.
\end{equation}
\label{microtheorem}
\end{theorem}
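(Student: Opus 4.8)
\emph{Proof idea.} The plan is to prove the implication by microlocal elliptic regularity, after showing that $X_{V}^{*}X_{V}$ is, microlocally near every $(z,\xi)\in\mathcal{M}'$, an elliptic pseudodifferential operator of order $-1$ modulo an operator that maps $L^{2}(\Omega)$ into $H^{l+1}(\Omega)$. Granting such a decomposition $X_{V}^{*}X_{V}=P+R$ valid near $(z,\xi)$, with $P\in\Psi^{-1}$ elliptic there and $R:L^{2}(\Omega)\to H^{l+1}(\Omega)$, the argument closes quickly: if $(z,\xi)\notin\mathrm{WF}^{\infty}(X_{V}^{*}X_{V}f)$ then $Pf=X_{V}^{*}X_{V}f-Rf$ is microlocally in $H^{l+1}$ near $(z,\xi)$, since the first term is microlocally smooth there and $Rf\in H^{l+1}(\Omega)$; applying a microlocal parametrix for $P$, which has order $+1$, then shows $f$ is microlocally in $H^{l}$ near $(z,\xi)$, i.e. $(z,\xi)\notin\mathrm{WF}^{l}(f)$. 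So everything reduces to producing the decomposition with $P$ elliptic on $\mathcal{M}'$.

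To build $P$ and $R$ I would expand the forward map through the Neumann series for $u$. With $u_{0}=T_{1}^{-1}f$ and $u_{j}=T_{1}^{-1}Ku_{j-1}$ one has $X_{V}f=\sum_{j\ge0}\chi_{V}\,u_{j}\big|_{\partial_{+}S\Omega_{1}}$, so truncating at $N=N(l)$ terms gives $X_{V}=\sum_{j=0}^{N}X_{V,j}+E_{N}$ with $X_{V,j}f:=\chi_{V}\big(T_{1}^{-1}(KT_{1}^{-1})^{j}f\big)\big|_{\partial_{+}S\Omega_{1}}$ the $j$-fold scattering contribution. The key input, and the reason for Appendix \ref{ap:compsing}, is that in two dimensions the Schwartz kernel of $(KT_{1}^{-1})^{j}$ is obtained by the substitution $(s,\theta')\mapsto y=x+s\theta'$ and is an iterated weakly singular integral, schematically $\int|x-y_{1}|^{-1}\cdots|y_{j-1}-y|^{-1}\,dy_{1}\cdots dy_{j-1}$; since $|x-y|^{-1}$ is exactly the Riesz kernel of order one in $\rn{2}$, the composition $(KT_{1}^{-1})^{j}$ gains an amount of Sobolev regularity that grows without bound in $j$. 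Expanding $X_{V}^{*}X_{V}=\sum_{i,j=0}^{N}X_{V,i}^{*}X_{V,j}+(\text{terms involving }E_{N})$ and taking $N$ large relative to $l$, every term containing $E_{N}$ maps $L^{2}(\Omega)$ into $H^{l+1}(\Omega)$ and is absorbed into $R$, so only the finitely many normal operators $X_{V,i}^{*}X_{V,j}$ with $0\le i,j\le N$ remain to be understood.

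The ballistic term $X_{V,0}^{*}X_{V,0}$ is the normal operator of an attenuated straight-line X-ray transform cut off by $\chi_{V}^{\#}$; by a standard computation (passing to line coordinates and recognizing the conormal kernel, or stationary phase) it is a classical pseudodifferential operator of order $-1$ whose principal symbol at $(z,\xi)$ is a positive multiple of $|\xi|^{-1}\sum_{\theta\in\mathbb{S}^{1}:\ \theta\cdot\xi=0}\chi_{V}^{\#}(z,\theta)^{2}\,w(z,\theta)$, with the weight $w$ built from the strictly positive attenuation factors $E(z,\theta)E(z,-\theta)$. Because those factors are positive and, by the very definition \eqref{microvisibleset} of $\mathcal{M}'$, at least one direction $\theta\perp\xi$ has $\chi_{V}^{\#}(z,\theta)\neq0$, this symbol is nonzero, so $X_{V,0}^{*}X_{V,0}$ is elliptic of order $-1$ microlocally on $\mathcal{M}'$; this is the only point at which $\mathcal{M}'$ enters. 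Each remaining normal operator $X_{V,i}^{*}X_{V,j}$ with $i+j\ge1$ carries at least one factor $K$, hence by the two-dimensional kernel bounds maps $L^{2}(\Omega)$ into $H^{1+i+j}(\Omega)$; those with $i+j\ge l$ are already $H^{l+1}$-smoothing and join $R$, while for $(\sigma,k)$ in a suitable open dense set $\mathcal{O}_{l}$ — refining the genericity of \cite{inversesource} so as also to guarantee existence and uniqueness of the solution $u\in L^{2}$ — the finitely many others are pseudodifferential of order $\le-2$, so that adding them to $X_{V,0}^{*}X_{V,0}$ leaves its order $-1$ ellipticity on $\mathcal{M}'$ intact. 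Taking $P$ to be this sum and $R:=X_{V}^{*}X_{V}-P$ produces the decomposition of the first paragraph. Since the number of sub-leading scattering corrections one must control this way — and hence the strength of the non-degeneracy required of $(\sigma,k)$ — grows with $l$, the set $\mathcal{O}_{l}$ necessarily depends on $l$.

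The step I expect to be the main obstacle is the analysis underlying Appendix \ref{ap:compsing}: establishing, with sufficient uniformity, both the sharp Sobolev-smoothing gains of the compositions $(KT_{1}^{-1})^{j}$ and, for the finitely many low-scattering normal operators, that their broken-ray kernels are conormal to the diagonal — so that they are genuinely pseudodifferential of order $\le-2$ and cannot spoil the ellipticity supplied by the ballistic symbol — while keeping track of how $N$ and the genericity on $(\sigma,k)$ grow with $l$. The two-dimensional hypothesis is essential throughout, since it is only in $\rn{2}$ that each scattering event contributes the clean one-derivative gain that makes the scheme work.
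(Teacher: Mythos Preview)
Your overall strategy is correct and matches what the paper describes: this theorem is not proved in the present paper but is quoted from \cite{hubenthal}, and the discussion surrounding it, together with Appendix~\ref{ap:compsing}, spells out exactly the decomposition you propose. Namely, $X_{V}^{*}X_{V}f = I_{\sigma,V}^{*}I_{\sigma,V}f + \mathcal{L}_{V}f$, where $I_{\sigma,V}^{*}I_{\sigma,V}$ (your ballistic term $X_{V,0}^{*}X_{V,0}$) is a $\Psi$DO of order $-1$, elliptic on $\mathcal{M}'$ with principal symbol~\eqref{eq:Iprincipalsymbol}, and $\mathcal{L}_{V}$ is a $\Psi$DO of order $-2$ plus a remainder mapping $L^{2}\to H^{l}$. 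Your closing step --- apply a microlocal parametrix of order $+1$ to deduce $(z,\xi)\notin\mathrm{WF}^{l}(f)$ --- is precisely how such statements are established. The dependence of $\mathcal{O}_{l}$ on $l$ arises exactly where you say it does.

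There is, however, one genuine error in your write-up: the claim that the two-dimensional hypothesis is \emph{essential} is false, and the theorem is stated for arbitrary $n$. The mechanism you invoke works in every dimension. The Schwartz kernel of $KT_{1}^{-1}J$ in $\rn{n}$ behaves like $|x-y|^{-(n-1)}$, and Appendix~\ref{ap:compsing} shows that the iterated kernel of $(KT_{1}^{-1})^{m}J$ has the form $\beta_{m}(x,y,|x-y|,\widehat{x-y},\theta)\,|x-y|^{-(n-m)}$ with $\beta_{m}$ smooth, hence is a classical $\Psi$DO of order $-m$ by Lemma~\ref{pseudochar} and Proposition~\ref{pseudoiterate}. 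Thus each scattering event contributes exactly one derivative of smoothing in \emph{any} dimension, not just $n=2$; the ``Riesz kernel of order one'' intuition you cite is the $n=2$ instance of this general fact. Your argument goes through unchanged once you drop the dimensional restriction and use the $n$-dimensional kernel analysis of Appendix~\ref{ap:compsing} in place of the two-dimensional heuristic.

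A minor point: when you write the Neumann expansion $X_{V}=\sum_{j\ge 0}\chi_{V}u_{j}|_{\partial_{+}S\Omega_{1}}$, be careful that the full series need not converge under the hypotheses of the theorem (only well-posedness of \eqref{transport} is assumed, not $\|KT_{1}^{-1}\|<1$). What one actually uses is the finite algebraic identity $(\mathrm{Id}-KT_{1}^{-1})^{-1}=\sum_{j=0}^{N}(KT_{1}^{-1})^{j}+(KT_{1}^{-1})^{N+1}(\mathrm{Id}-KT_{1}^{-1})^{-1}$, which is how the proof of Lemma~\ref{approxnormalsmooth} proceeds; your tail $E_{N}$ should be defined this way rather than as the remainder of a possibly divergent series. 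You appear to have this in mind, but it should be stated explicitly.
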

\begin{remark}In \cite{hubenthal} Theorem \ref{microtheorem} is stated with the conclusion that $(z,\xi) \notin \mathrm{WF}^{\infty}(f + v)$, where the remainder $v$ is an $H^{l}$ function with an explicit formula in terms of the measurement operator.\end{remark}

When $\|KT_{1}^{-1}\|$ has small enough norm as an operator from $L^{2}(\Omega \times \mathbb{S}^{n-1}) \to L^{2}(\Omega \times \mathbb{S}^{n-1})$, it turns out that the dependence on $l$ in the Theorem \ref{microtheorem} goes away. 
\begin{corollary}[\cite{hubenthal}, Corollary 6.4] Suppose that $\|KT_{1}^{-1}\| < 1$. Then there is a dense open set $\mathcal{O}$ of pairs $(\sigma, k) \in C^{\infty} \times C^{\infty}$ for which (\ref{transport}) has a unique solution $u \in L^{2}$ for all $f \in L^{2}(\Omega)$, and
\begin{equation*}
(z,\xi) \notin \mathrm{WF}^{\infty}(X_{V}^{*}X_{V}f) \Longrightarrow (z,\xi) \notin \mathrm{WF}^{\infty}(f), \quad \forall (z,\xi) \in \mathcal{M}'.
\end{equation*}
\end{corollary}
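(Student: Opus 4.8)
\emph{Proof proposal.} The plan is to obtain this as a sharpening of Theorem \ref{microtheorem}: under the extra hypothesis $\|KT_{1}^{-1}\| < 1$ the entire Neumann series is available, so nothing is truncated, and the finite index $l$ in Theorem \ref{microtheorem} can be sent to $+\infty$. Writing $u = \sum_{m\geq 0}u_{m}$ with $u_{0} = T_{1}^{-1}f$ and $u_{m} = T_{1}^{-1}Ku_{m-1}$, set $Y_{m}f := \chi_{V}u_{m}|_{\partial_{+}S\Omega_{1}}$, so that $X_{V}f = \sum_{m\geq 0}Y_{m}f$ and $X_{V}^{*}X_{V} = \sum_{j,m\geq 0}Y_{j}^{*}Y_{m}$. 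I would split this double sum as $X_{V}^{*}X_{V} = N + R$ with $N := Y_{0}^{*}Y_{0} + Y_{0}^{*}Y_{1} + Y_{1}^{*}Y_{0}$ and $R := \sum_{j+m\geq 2}Y_{j}^{*}Y_{m}$.

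First I would show that $N$ is, microlocally on $\mathcal{M}'$, an elliptic pseudodifferential operator of order $-1$. The ballistic term $Y_{0}^{*}Y_{0}$ is a $\chi_{V}$-weighted normal operator of the attenuated straight-line X-ray transform; by the clean-intersection calculus it is a $\Psi$DO of order $-1$ whose principal symbol at $(z,\xi)$ is a positive multiple of $|\xi|^{-1}\int_{\theta\cdot\xi=0}|\chi_{V}^{\#}(z,\theta)|^{2}E(z,\theta)^{2}\,dS(\theta)$, which is strictly positive exactly when $(z,\xi)\in\mathcal{M}'$ --- this is how $\mathcal{M}'$ is defined in (\ref{microvisibleset}). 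Since each scattering event contributes a kernel $\sim|x-y|^{-(n-1)}$, i.e.\ a gain of one derivative, the single-scattering part $Y_{0}^{*}Y_{1} + Y_{1}^{*}Y_{0}$ is again of order $-1$ and, being self-adjoint, adds a real but a priori sign-indefinite symbol of that order; the open dense set $\mathcal{O}$ is then precisely the set of $(\sigma,k)$ for which the total principal symbol of $N$ is still nonzero on $\mathcal{M}'$. The crucial point, absent from Theorem \ref{microtheorem}, is that this non-degeneracy condition no longer references any truncation level, so a single perturbation argument --- perturb $\sigma$ as in \cite{inversesource,hubenthal} to remove an accidental cancellation --- produces one set $\mathcal{O}$ valid for all $l$; openness is immediate from continuous dependence of the symbol on $(\sigma,k)\in C^{\infty}$, and density from the perturbation.

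The second and main step is to prove that $\|KT_{1}^{-1}\| < 1$ forces $R$ to be $C^{\infty}$-smoothing on $\Omega$. Each $K$ is local in $x$ and smooth in its outgoing angle, so a term $Y_{j}^{*}Y_{m}$ with $p := j + m$ scattering events is an iterated weakly singular integral operator of the type analysed in Appendix \ref{ap:compsing}; those estimates should yield a bound $\|Y_{j}^{*}Y_{m}\|_{L^{2}(\Omega)\to H^{cp}(\Omega)} \leq C^{p}$ with $c > 0$ and $C$ independent of $j,m$, while $\|KT_{1}^{-1}\| < 1$ gives the geometric decay $\|Y_{j}^{*}Y_{m}\|_{L^{2}\to L^{2}} \leq \rho^{p}$ for some $\rho < 1$. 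No individual term with $p\geq 2$ is smooth, but interpolation between these two bounds gives $\|Y_{j}^{*}Y_{m}\|_{L^{2}\to H^{s}} \leq \rho^{\,p}(C/\rho)^{\,s/c}$ whenever $cp \geq s$, and since there are only $p+1$ pairs with $j+m=p$, the series $\sum_{j+m\geq 2}Y_{j}^{*}Y_{m}$ converges in the operator norm $L^{2}(\Omega)\to H^{s}(\Omega)$ for every $s$; hence $R : L^{2}(\Omega)\to C^{\infty}(\Omega)$. Finally, applying a microlocal parametrix $Q$ of order $+1$ for $N$ near a given $(z,\xi)\in\mathcal{M}'$ yields $f = QX_{V}^{*}X_{V}f - QRf + (\text{smooth}) = QX_{V}^{*}X_{V}f + (\text{smooth})$ microlocally near $(z,\xi)$, so $(z,\xi)\notin\mathrm{WF}^{\infty}(X_{V}^{*}X_{V}f)$ implies $(z,\xi)\notin\mathrm{WF}^{\infty}(f)$; in the language of the Remark this says the remainder $v$ is genuinely smooth.

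I expect the main obstacle to be the uniform-in-$p$ smoothing estimate for $R$: one must show that the regularity gained per scattering event comes with constants growing at most geometrically in $p$, so that the Sobolev series still converges once weighed against the $\rho^{p}$ decay. This is exactly what the iterated weakly-singular-integral bounds of Appendix \ref{ap:compsing} must supply, and it is the full-data analogue of Lemma \ref{approxnormalsmooth}. A secondary subtlety is verifying that the single-scattering terms genuinely sit at order $-1$ (so they must be kept in $N$ rather than absorbed into $R$) and that the cutoff $\chi_{V}$ localizes the entire construction to $\mathcal{M}'$ without spurious boundary contributions.
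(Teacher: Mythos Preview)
The paper does not prove this corollary; it is merely quoted from \cite{hubenthal} as background. So there is no in-paper proof to compare against directly. That said, the paper does summarize the structure of $X_V^*X_V$ from \cite{hubenthal}, and comparing your proposal to that summary reveals a genuine error in your decomposition.

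You place the single-scattering cross terms $Y_0^*Y_1 + Y_1^*Y_0$ into the leading part $N$, asserting they are order $-1$. This is wrong: the paper states (just below equation \eqref{eq:Iprincipalsymbol}) that $X_V^*X_V f = I_{\sigma,V}^*I_{\sigma,V}f + \mathcal{L}_V f$ where $I_{\sigma,V}^*I_{\sigma,V} = Y_0^*Y_0$ is the entire order $-1$ part and $\mathcal{L}_V$ is a $\Psi$DO of order $-2$ plus an operator $R$ with $R=0$ when $\|KT_1^{-1}\|<1$. Your own heuristic --- each scattering event gains one derivative --- already predicts this: if $Y_0^*Y_0$ is order $-1$ and inserting one more $KT_1^{-1}$ gains a derivative, then $Y_0^*Y_1$ is order $-2$, not $-1$. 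Consequently the principal symbol of $X_V^*X_V$ on $\mathcal{M}'$ is simply $\rho(x,\xi)$ from \eqref{eq:Iprincipalsymbol}, which is automatically positive there by the very definition of $\mathcal{M}'$. Your account of the open dense set $\mathcal{O}$ as arising from a sign-indefinite single-scattering contribution to the principal symbol, to be removed by perturbing $\sigma$, is therefore not the correct mechanism.

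Apart from this mis-identification of orders, your overall architecture --- microlocal parametrix for the ballistic normal operator, plus an interpolation argument combining the geometric $L^2$ decay $\rho^p$ with the $p$-derivative Sobolev gain to show the tail is $C^\infty$-smoothing --- is sound and is essentially how one would derive the corollary from Theorem \ref{microtheorem}. The interpolation step is a clean way to pass from finite $l$ to $l=\infty$; what the paper records instead is simply that under $\|KT_1^{-1}\|<1$ the remainder $R$ vanishes identically, which is a stronger statement than your estimate but relies on structure from \cite{hubenthal} not reproduced here. Your acknowledged obstacle --- geometric growth of the $L^2\to H^p$ constants --- is exactly what Appendix \ref{ap:compsing} is set up to provide, though the appendix as written stops short of tracking constants in $m$.
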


With this in mind, we can state the main theoretical estimate of this work, which justifies our approach in the numerical computations. The proof is given in \S \ref{smoothnessanalysis}.
\begin{lemma}
Suppose $f \in H^{l}(\Omega)$ and $\sigma, k \in C^{\infty}$. Let $m_{1},m_{2} \geq 0$. Then
\begin{equation*}
X_{V}^{*}X_{V}f - \sum_{j=0}^{m_{2}}\sum_{i=0}^{m_{1}}[\chi_{V}R_{+}T_{1}^{-1}(KT_{1}^{-1})^{j}J]^{*}\chi_{V}R_{+}T_{1}^{-1}(KT_{1}^{-1})^{i}Jf \in H^{l+m+1}(\Omega_{1})
\end{equation*}
where $m = \min\{m_{1},m_{2}\}$. \label{approxnormalsmooth}
\end{lemma}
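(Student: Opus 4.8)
The plan is to expand $X_V$ in its Neumann series, read off the truncation error as a sum of cross terms, and then control the Sobolev regularity of each cross term by observing that every scattering event contributes one additional weakly singular factor to the relevant Schwartz kernel. Concretely, set $A_i := \chi_V R_+ T_1^{-1}(KT_1^{-1})^i J$, so that, under the standing assumption of \S\ref{statementofresults} that the Neumann series for $(\mathrm{Id}-KT_1^{-1})^{-1}$ converges, one has $X_V = \sum_{i\ge 0}A_i$ and hence $X_V^*X_V = \sum_{i\ge 0}\sum_{j\ge 0}A_j^*A_i$, the double series converging in operator norm. Since the operator appearing in the lemma is the partial double sum $\sum_{j=0}^{m_2}\sum_{i=0}^{m_1}A_j^*A_i$, the difference to be estimated is the sum of all $A_j^*A_i$ with $i>m_1$ or $j>m_2$. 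Introducing the partial sums $S_m := \chi_V R_+ T_1^{-1}\left(\sum_{i=0}^m(KT_1^{-1})^i\right)J$ and the tails $R_m := \chi_V R_+ T_1^{-1}(KT_1^{-1})^{m+1}(\mathrm{Id}-KT_1^{-1})^{-1}J$, one has $X_V = S_m + R_m$ for every $m\ge 0$, and expanding $X_V^*X_V=(S_{m_2}+R_{m_2})^*(S_{m_1}+R_{m_1})$ shows the difference equals $S_{m_2}^*R_{m_1} + R_{m_2}^*S_{m_1} + R_{m_2}^*R_{m_1}$. It thus reduces to bounding the regularity of each individual $A_j^*A_i$ and summing.

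The key estimate I would establish is that
\[ A_j^*A_i : H^l(\Omega)\to H^{l+i+j+1}(\Omega_1), \]
with operator norms summable over $(i,j)$. The point is that $A_0^*A_0$ is the normal operator of the partial-data attenuated ballistic transform $\chi_V R_+T_1^{-1}J$, whose Schwartz kernel is $c(x,y)|x-y|^{1-n}$ with $c$ smooth --- a weakly singular kernel of degree $1-n$, i.e.\ the order $-1$ operator $N_0$ of \cite{inversesource}. Writing out the Schwartz kernel of a general $A_j^*A_i$, each of the $i$ scattering events in $A_i$ and each of the $j$ scattering events in $A_j^*$ contributes an additional integration over the scattering direction against the smooth kernel $k$ and a smooth attenuation weight, which by back-projection yields one more weakly singular factor of degree $1-n$, while the boundary integral glueing $A_j^*$ to $A_i$ supplies one final such factor. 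Consequently $A_j^*A_i$ is, modulo smoother remainders, the operator whose kernel is the $(i+j+1)$-fold composition of degree-$(1-n)$ weakly singular kernels decorated by smooth factors. By the iterated weakly singular kernel estimates of Appendix \ref{ap:compsing}, which also treat the borderline logarithmic behaviour occurring in dimension two, such a composition is a kernel of Riesz-potential type of order $i+j+1$, giving the stated mapping property; together with convergence of the Neumann series this yields the summability of the norms. (Only these kernel bounds enter here; the genericity and ellipticity of $N_0$ needed for the inverse source results of \cite{inversesource,hubenthal} play no role in the smoothness of the remainder.)

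Granting this, the conclusion is a matter of counting scatterings. Every summand of $S_{m_2}^*R_{m_1}$ is an $A_j^*A_i$ with $j\le m_2$ and $i\ge m_1+1$, hence with $i+j\ge m_1+1\ge m+1$, so it lies in $H^{l+m+2}(\Omega_1)\subseteq H^{l+m+1}(\Omega_1)$; the same holds for $R_{m_2}^*S_{m_1}$ by symmetry, and every summand of $R_{m_2}^*R_{m_1}$ has $i+j\ge m_1+m_2+2\ge m+2$. Summing the three convergent series places the difference in $H^{l+m+1}(\Omega_1)$, which is the assertion.

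The step I expect to be the main obstacle is the uniform smoothing estimate for the iterated weakly singular operators: one must verify that decorating the singular factors by the smooth attenuation and scattering kernels does not degrade the order count, that the $(r+1)$-fold composition of degree-$(1-n)$ kernels genuinely behaves like the Riesz potential of order $r+1$ --- including the logarithmic borderline in two dimensions --- and that the implied constants are controlled uniformly in $r=i+j$ so that the double series actually converges in $H^{l+m+1}$. This uniform bookkeeping, together with careful tracking of the cutoffs $\chi_V$ (which only localize) and of the extension/restriction $J$ relating $\Omega$ and $\Omega_1$, is precisely what \S\ref{smoothnessanalysis} and Appendix \ref{ap:compsing} are for.
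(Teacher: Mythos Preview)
Your decomposition of the error into $S_{m_2}^*R_{m_1}+R_{m_2}^*S_{m_1}+R_{m_2}^*R_{m_1}$ is exactly the paper's $A_2+A_1+A_3$. Where you diverge is in how you estimate these three blocks. The paper does \emph{not} expand the tails $R_{m_i}$ into individual $A_j^*A_i$'s and sum; instead it keeps $(\mathrm{Id}-KT_1^{-1})^{-1}$ as a single operator bounded on $H^j$, notes that the middle factor $M:=[\chi_V R_+T_1^{-1}]^*\chi_V R_+T_1^{-1}$ is merely bounded on $H^j(\Omega\times\mathbb{S}^{n-1})$ (no smoothing claimed there), and extracts all the regularity gain from the single factor $(KT_1^{-1})^{m_i+1}J$ (or its adjoint) on one side, which by Proposition~\ref{pseudoiterate} is order $-(m_i+1)$. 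This gives $A_1:H^l\to H^{l+m_2+1}$, $A_2:H^l\to H^{l+m_1+1}$, $A_3:H^l\to H^{l+m_1+m_2+2}$, hence the claim.

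Your route---proving $A_j^*A_i:H^l\to H^{l+i+j+1}$ by writing its Schwartz kernel as an $(i{+}j{+}1)$-fold iterated weakly singular integral---is correct in spirit and would yield a sharper termwise statement, but it costs more: (i) Appendix~\ref{ap:compsing} as written analyzes $(KT_1^{-1})^mJ$ and its adjoint, not the sandwich with the boundary piece $M$ in the middle, so the ``one final factor'' from the glueing needs its own argument; (ii) you must control the $H^l\to H^{l+m+1}$ norms uniformly in $(i,j)$ to sum the double series, whereas the paper sidesteps summability entirely by never expanding the resolvent. The paper's argument is therefore both shorter and valid under the weaker hypothesis that $(\mathrm{Id}-KT_1^{-1})^{-1}$ exists and is $H^j$-bounded (cf.\ the Remark after the lemma), while yours needs $\|KT_1^{-1}\|<1$ together with the uniform kernel bookkeeping you flag in your last paragraph.
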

\begin{remark}Note that the partial Neumann series expansion in Lemma \ref{approxnormalsmooth} might not converge to $X_{V}^{*}X_{V}f$ as $\min\{m_{1},m_{2}\}\to \infty$, even if $(\sigma, k)$ are such that (\ref{transport}) is well-posed. In such cases, the error above still has the stated smoothness but possibly becomes unbounded as $m_{1},m_{2} \to \infty$.\end{remark}

It is important to realize that although $X_{V}^{*}X_{V}f$ contains information of the singularities of $f$, its smoothing properties change the order of such singularities. Recall from \cite{hubenthal} that $X_{V}^{*}X_{V}f = I_{\sigma, V}^{*}I_{\sigma,V}f + \mathcal{L}_{V}f$ where
\begin{align*}
I_{\sigma, V}f(x,\theta) & := \chi_{V}(x,\theta) I_{\sigma}f(x,\theta) \\
& = \chi_{V}(x,\theta)  \int_{\tau_{-}(x,\theta)}^{0}E(x+t\theta,\theta)f(x+t\theta)\,dt, \quad (x,\theta) \in \partial_{+}S\Omega_{1}
\end{align*}
is the partial data attenuated X-ray transform, and $\mathcal{L}_{V}$ is a $\Psi$-DO of order $-2$ plus an integral operator $R:L^{2}(\Omega) \to H^{l}(\Omega_{1})$ with $l \geq 1$ arbitrary ($R = 0$ if $\|KT_{1}^{-1}\| < 1$). Moreover, from \cite{xraygeneric} $I_{\sigma, V}^{*}I_{\sigma,V}$ is a $\Psi$-DO of order $-1$, elliptic on $\mathcal{M}'$, and its principal symbol is explicitly given by
\begin{equation}
\rho(x,\xi) = 2\pi \int_{\theta \cdot \xi = 0}|E(x,\theta)|^{2}|\chi_{V}^{\#}(x,\theta)|^{2}\,dS(\theta). \label{eq:Iprincipalsymbol}
\end{equation}
Thus, in order to obtain the singularities of $f$ in $\mathcal{M}'$ with the correct order, one could attempt to compute explicitly a parametrix $Q$ of $I_{\sigma, V}^{*}I_{\sigma, V}$ by defining
\begin{equation*}
Qg = \mathcal{F}^{-1}(\varphi(x,\xi)\rho(x,\xi)^{-1}\widehat{g}),
\end{equation*}
where $\varphi(x,\xi) \equiv 1$ on some compact subset of $\mathcal{M}'$ and vanishes outside of $\mathcal{M}'$ ($\mathcal{F}$ denotes the Fourier transform). Applying the $Q$ from above to $X_{V}^{*}X_{V}f$ will yield $f$ plus some error term $Sf$ which is order $1$ more smooth than $f$. In this paper we do not go farther than computing $X_{V}^{*}X_{V}$, and thus the results herein are primarily a qualitative realization of the theory from \cite{hubenthal}. However, it may be of interest to consider the computation of such parametrices in the future.

\section{Numerical Method for the Direct Problem \label{numericalmethod}}
We now present a detailed summary of the method used to solve (\ref{transport}) for $n=2$. Note that at this stage it is not important that the source $f$ be isotropic, but such an assumption will be used when computing the normal operator later on. As in \cite{monard} we use the source iteration method, which requires one to solve problems of the form
\begin{align}
& \theta \cdot \nabla_{x}u + \sigma(x,\theta)u = g(x,\theta) \notag\\
& u \vert_{\partial_{-}S\Omega} = 0. \label{transportiteration}
\end{align}

Without loss of generality, we may take $\Omega = \mathbb{D} \subset \rn{2}$ where $\mathbb{D}$ is the unit disk, and assume that $\sigma,k$ and $f$ are all supported compactly in $\mathbb{D} \times \mathbb{S}^{1}$ or $\mathbb{D}$ as appropriate. Such an assumption can be justified by finding a ball $B(0,R)$ large enough and rescaling the coordinates accordingly. The main advantage here from a numerical standpoint is that $\Omega$ remains invariant under rotations. Now, for actually computing $T_{1}^{-1}$, it will be easier to have boundary conditions on a cartesian domain. To that end, for each $\eta \in (0,2\pi)$ denote $\theta = \theta(\eta) = (\cos(\eta),\sin(\eta))$ and let $\theta^{\perp} = \theta(\eta)^{\perp}$ be the counterclockwise rotation of $\theta$ by $\pi/2$. That is $\theta(\eta)^{\perp} = (-\sin{\eta}, \cos{\eta})$. For each $\eta \in (0,2\pi)$ define the $\eta$-dependent square
\begin{equation*}
C_{\eta} = \{x \in \rn{2} \textrm{ such that }|x \cdot \theta| < 1 \textrm{ and }|x \cdot \theta^{\perp}| < 1\}.
\end{equation*}
In short, $C_{\eta}$ is the square of side length $2$ rotated by an angle $\eta$. The corresponding incoming and outgoing sets (analogous to $\partial_{\pm}S\Omega$ are given by
\begin{equation*}
\Gamma_{\pm, \eta} = \{x \in \partial C_{\eta} \textrm{ such that } x \cdot \theta = \pm 1 \textrm{ and } |x \cdot \theta^{\perp}|< 1 \}.
\end{equation*}
At the heart of the rotational method, we perform a rotational change coordinates so that the derivative in the direction $\theta$ becomes $\partial_{x}$. Fix an angle $\eta$ and for $x = (\mathbf{x}, \mathbf{y}) \in [-1,1]^{2}$ define
\begin{align}
u_{\eta}(\mathbf{x}, \mathbf{y}) & = [u]_{\eta}(\mathbf{x}, \mathbf{y}) := u(\mathbf{x}\cos{\eta} - \mathbf{y}\sin{\eta}, \mathbf{x}\sin{\eta} + \mathbf{y}\cos{\eta}, \eta)\\
v_{\eta}(\mathbf{x}, \mathbf{y}) & = [v]_{\eta}(\mathbf{x}, \mathbf{y}) := v(\mathbf{x}\cos{\eta} - \mathbf{y}\sin{\eta}, \mathbf{x}\sin{\eta} + \mathbf{y}\cos{\eta})
\end{align}
where $u,v$ are functions on $\rn{2}$ and $\rn{2} \times \mathbb{S}^{1}$, respectively. The bracket notation will only be used for functions which already have a subscript. With this change of variables, we can rewrite (\ref{transportiteration}) as
\begin{align}
& \frac{\partial}{\partial x_{1}} u_{\eta}(\mathbf{x}, \mathbf{y}) + \sigma_{\eta}(\mathbf{x}, \mathbf{y})u_{\eta}(\mathbf{x}, \mathbf{y}) = g_{\eta}(\mathbf{x}, \mathbf{y}),\label{rotatedtransport}\\
& u_{\eta}(-1,\mathbf{y}) = 0 \qquad \textrm{ on }\Gamma_{-,\eta}\notag
\end{align}
We remark that for a general (possibly non-zero) function $h \in L^{1}(\partial_{-}S\mathbb{D})$, we would have the boundary condition $\widetilde{h}_{\eta}$ on $\Gamma_{-,\eta}$, obtained by projecting $h$ onto $\Gamma_{-,\eta}$ via the relation $\widetilde{h}_{\eta}(x_{1},x_{2}) = h( P_{-}^{-1}(x_{1},x_{2},\eta))$, where
\begin{equation*}
P_{\pm}: \partial_{\pm}S\mathbb{D} \ni (x,\eta) \mapsto P_{\pm}(x,\eta) = \pm \theta(\eta) - \det(x,\theta(\eta))\theta(\eta)^{\perp} \in \Gamma_{\pm,\eta}.
\end{equation*}

Since the measurements are necessarily discrete, we introduce the fixed parameters $N_{x}$, $N_{d}$ and  $N_{\mathrm{scat}}$ to represent the number of grid points in each spatial dimension, the number of directions measured, and the number of scattering terms computed in the series $T_{1}^{-1}\sum_{m=0}^{\infty}(KT_{1}^{-1})^{m}J$, respectively. Typically, we will take each such parameter to be a power of $2$, since FFT algorithms are used to rotate the grids. Then the basic idea for computing $T_{1}^{-1}$ is to solve equations of the type (\ref{rotatedtransport}) by first computing $g_{\eta}$ and  $\sigma_{\eta}$ by rotating the images of $\sigma(\cdot, \eta)$ and $g(\cdot, \eta)$ clockwise by the angle $\eta$ in the $x$ variable. Then we solve (\ref{rotatedtransport}) for $u_{\eta}$, which can be done, for example, by using a simple first order Euler method along each column of the cartesian grid. Specifically, denote $s_{x} := \frac{2}{N_{x}}$, we set $u_{\eta}(x_{1},:) = 0$ and consider the cartesian grid $\{\mathbf{x}_{1}, \ldots, \mathbf{x}_{N_{x}}\} \otimes \{\mathbf{y}_{1},\ldots, \mathbf{y}_{N_{x}}\}$ where $\mathbf{x}_{i}, \mathbf{y}_{i} = -1+ (i-\frac{1}{2})s_{x}$ for $1 \leq i \leq N_{x}$. Then the Euler method would give the update
\begin{equation*}
u_{\eta}(x_{j},\mathbf{y}) = u_{\eta}(x_{j-1},\mathbf{y}) + s_{x}\left(g_{\eta}(x_{j-1},\mathbf{y}) - \sigma_{\eta}(x_{j-1},\mathbf{y})u_{\eta}(x_{j-1},\mathbf{y}) \right), \qquad  2 \leq j \leq N_{x}.
\end{equation*}

In order to compute $K$, we define the angular step size $\delta := \frac{2\pi}{N_{d}}$ and sum over the set of angles $\{\eta_{1},\ldots, \eta_{N_{d}}\}$ where $\eta_{i} = \left( 1 - \frac{1}{2} \right) \delta$. We can then approximate $K$ by the discrete operator $K_{\delta}$ which has the formula
\begin{equation}
K_{\delta}g(\mathbf{x},\eta_{i}) = \delta \sum_{j=1}^{N_{d}} g(\mathbf{x},\eta_{j})k(\mathbf{x},\eta_{j},\eta_{i}), \qquad  1 \leq i \leq N_{d}. \label{Kdiscrete}
\end{equation}

\subsection{Computing $X_{V}$ \label{computeXV}}
We describe in Algorithm \ref{forwardalg} the iterative scheme to numerically solve (\ref{transport}) and simulate the partial data $X_{V}f$ with respect to some subset $V \subset \partial_{+}S\Omega$.
\begin{algorithm}
\caption{Computing $X_{V}f$}
\label{forwardalg}
\begin{algorithmic}[1]
\State Let $u_{0}$ solve $\theta \cdot \nabla u + \sigma u = f$
\For{$i=1$ to $N_{d}$}
\State \textbf{compute} $\sigma_{\eta_{i}}$ and $f_{\eta_{i}}$
\State \textbf{solve} $\partial_{x_{1}}[u_{0}]_{\eta_{i}} + \sigma_{\eta_{i}}[u_{0}]_{\eta_{i}} = f_{\eta_{i}}$ for $[u_{0}]_{\eta_{i}}$
\State $u_{0}(\mathbf{x}, \mathbf{y}, \eta_{i}) \gets \left[ [u_{0}]_{\eta_{i}} \right]_{-\eta_{i}}(\mathbf{x}, \mathbf{y})$
\EndFor
\For{$j=1$ to $N_{\mathrm{scat}}$}
\State $f_{j} \gets K_{\delta}u_{j-1}$
\For{$i=1$ to $N_{d}$}
\State \textbf{compute} $[f_{j}]_{\eta_{i}}$
\State \textbf{solve} $\partial_{x_{1}}[u_{j}]_{\eta_{i}} + \sigma_{\eta_{i}}[u_{j}]_{\eta_{i}} = [f_{j}]_{\eta_{i}}$ for $[u_{j}]_{\eta_{i}}$
\State $u_{j}(\mathbf{x}, \mathbf{y}, \eta_{i}) \gets \left[ [u_{j}]_{\eta_{i}} \right]_{-\eta_{i}}(\mathbf{x}, \mathbf{y})$
\EndFor
\EndFor
\State $u \gets \sum_{j=0}^{N_{\mathrm{scat}}}u_{j}$
\State \textbf{compute} $\chi_{V}u \Big\vert_{\partial_{+}S\mathbb{D}}$
\end{algorithmic}
\end{algorithm}

What we have not yet described in much detail is the method used to compute the rotations of each function, which we discuss briefly within the remaining portion of this section. For more details however, we refer the reader to \cite{monard}. The general idea is to write the rotation map $r_{\eta}(\mathbf{x},\mathbf{y}) = (\mathbf{x}\cos{\eta} + \mathbf{y}\cos{\eta}, -\mathbf{x}\sin{\eta} + \mathbf{y}\cos{\eta})$ as a composition of dilations and shearing/slant operations in each variable separately. In particular, we can write
\begin{equation*}
\begin{array}{rlll}
r_{\eta} & = & d_{\mathbf{x}, \frac{1}{\cos{\eta}}} \circ s_{\mathbf{x}, -\sin{\eta}} \circ d_{\mathbf{y},\cos{\eta}} \circ s_{\mathbf{y},\tan{\eta}}, &   \\
s_{\mathbf{y}, \alpha}(\mathbf{x},\mathbf{y}) & = & (\mathbf{x}, \mathbf{y} - \alpha \mathbf{x}),  \qquad  s_{\mathbf{x},\beta}(\mathbf{x},\mathbf{y}) = (\mathbf{x}-\beta \mathbf{y},\mathbf{y}), & \alpha,\beta \in \mathbb{R}\\ 
d_{\mathbf{x},t}(\mathbf{x},\mathbf{y}) & = & (t\mathbf{x}, \mathbf{y}), \qquad  d_{\mathbf{y},t}(\mathbf{x},\mathbf{y}) = (\mathbf{x}, t\mathbf{y}), & t \in \mathbb{R}.
\end{array}
\end{equation*}
The shearing/slant operations $s_{\mathbf{x},\beta}$ and $s_{\mathbf{y},\alpha}$ are implemented in phase space using a periodic interpolation function together with some identities utilizing the discrete forward and inverse Fourier transform. Specifically, we first embed the $N_{x} \times N_{x}$ image into a $2N_{x} \times N_{x}$ image, padding the top and bottom of the image with $\frac{N_{x}}{2} \times N_{x}$ arrays of zeros. We then perform the vertical shearing operation $s_{\mathbf{y},\alpha}$, which independently shifts each column of the image by an amount that depends linearly on the column index with factor $-\alpha$. 

The operation of shifting a vector $x = [x_{1},\ldots, x_{m}]^{T}$ by an amount $s$ is done in phase space. First we define the $2m$-periodic function
\begin{equation}
D_{m}(y) = \frac{\sin(\pi y)}{m \sin\left( \frac{\pi y}{m} \right)}, \quad y \in \mathbb{R}.
\end{equation}
We then define the spectral interpolant
\begin{equation}
\widetilde{x}(y) := \sum_{l=1}^{m}x_{l}D_{m}(y-l),
\end{equation}
which coincides with $x_{j}$ when $y = j$ and interpolates between those values sinusoidally (see Figure \ref{spectralinterp}). The spectrally shifted vector $x_{s}$ is given by
\begin{equation*}
x_{s} = [\widetilde{x}(1+s), \widetilde{x}(2+s),\ldots, \widetilde{x}(m + s)]^{T}.
\end{equation*}
Note that $x_{s}$ is not a priori defined for $s \in \mathbb{Z}$, but it can be extended continuously to such points due to the structure of the singularities of  $D_{m}$.
\begin{figure}
\centering
\includegraphics[width=0.8\textwidth, clip=true, trim = 140 50 110 40]{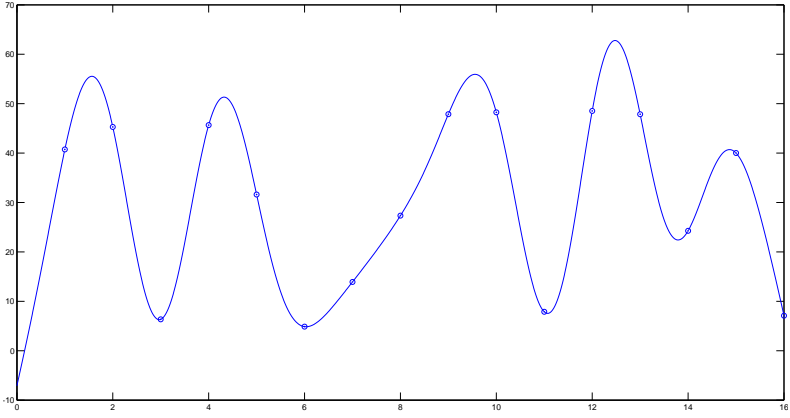}
\caption{The values of a random $16$-vector $x$ are plotted while overlayed with the spectral interpolant $\widetilde{x}(y) = \sum_{l=1}^{m}x_{l}D_{m}(y-l)$ for $y \in [0,16]$.\label{spectralinterp}}
\end{figure}

Now, in practice we have first padded the image above and below with zeros, so we have $m = 2N_{x}$ which is even. It is then straightforward to verify that
\begin{equation*}
D_{2N_{x}}(t) = \frac{\sin(\pi t)}{2N_{x}\sin{ \frac{\pi t}{2N_{x}}}} = \frac{1}{2N_{x}}\sum_{l=-N_{x}}^{N_{x}-1}e^{i \frac{\pi}{N_{x}}(l + \frac{1}{2})t}, \quad t \in [0,2N_{x}].
\end{equation*}
Recall the $N$-point discrete Fourier and inverse Fourier transforms, given by
\begin{align*}
X(k) & = F_{j \mapsto k}^{N}[x(j)] = \sum_{j=1}^{N}x(j)e^{-\frac{2 \pi i}{N}(j-1)(k-1)}, \quad k=1,\ldots, N,\\
x(j) & = F_{k \mapsto j}^{-1, N}[X(k)] = \frac{1}{N}\sum_{k=1}^{N}X(k)e^{2\pi i}{N}(j-1)(k-1), \quad j = 1,\ldots, N.
\end{align*}
It is then possible to write $\widetilde{x}(l-s)$ for $l=1,\ldots, 2N_{x}$ as a composition of discrete Fourier transforms and inverse transforms and multiplication by complex exponentials. In particular,
\begin{align}
& \quad \widetilde{x}(l-s) \\
& = e^{i \frac{\pi}{N_{x}}(-N_{x} + \frac{1}{2})(l-1)}F_{k \mapsto l}^{-1, 2N_{x}}\left[ e^{-i \frac{\pi}{N_{x}}(k-1-N_{x} + \frac{1}{2})s}
F_{j \mapsto k}^{2N_{x}}\left[ x(j)e^{-i \frac{\pi}{N_{x}}(-N_{x} + \frac{1}{2})(j-1)}\right] \right]. \notag
\end{align}

The dilation operations $d_{\mathbf{x},t}$ and $d_{\mathbf{y},t}$ are computed via a resampling done in the Fourier domain. In particular, we must resample a vector $x$ of size $2N_{x}$ down to a vector $\overline{x}$ of size $N_{x}$ but with a different step size. We can do this by using the $N$-point fractional Fourier transform with coefficient $\alpha$, defined by
\begin{equation}
X(l) = G_{k \mapsto l}^{N, \alpha}[x] = \sum_{k=1}^{N}x(k)e^{-2\pi i \alpha (k-1)(l-1)}.
\end{equation}
For example, if we start with a vector $x$ sampled at the gridpoints $\{j-1\}_{j=1}^{2m}$ and we want to shift $x$ by $s$ (i.e. sample at $\{j-1+s\}_{j=1}^{2m}$) and then resample back to a vector $\overline{x}$ taking values at the $m$ points $y_{l} = s+h(l-1)$ for $l=1,\ldots,m$, then
\begin{equation}
\overline{x}(y_{l}) = \frac{1}{2m} e^{i \frac{\pi}{m}(-m + \frac{1}{2})h(l-1)}G_{k \mapsto l}^{2m, \frac{-h}{2m}}\left[ e^{i \frac{\pi}{m}(k - m - 1 + \frac{1}{2})s}F_{j \mapsto k}^{2m}\left[ x(j)e^{-i \frac{\pi}{m}(-m + \frac{1}{2})(j-1)} \right] \right].
\end{equation}
This corresponds to a vertical dilation composed wth a vertical shift (see \cite{fractionalfourier,monard}).

\begin{figure}
\centering
\includegraphics[width=\textwidth, clip=true, trim =145 0 130 10]{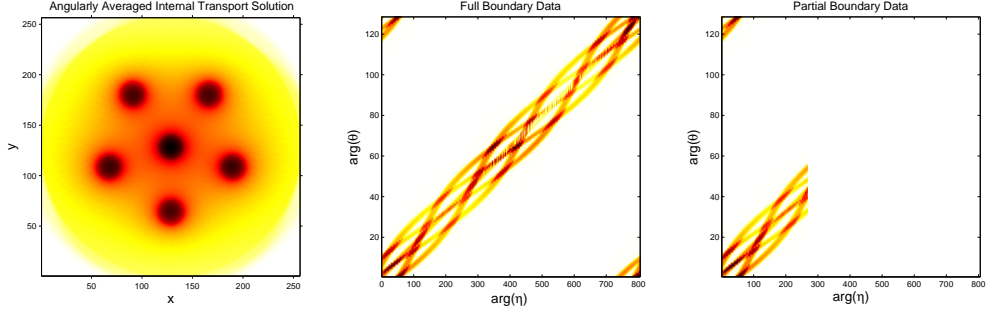}
\caption{On the left is the angularly averaged solution to (\ref{transport}) corresponding to the internal source given in Figure \ref{disksnonoise}. The other two images show the complete and partial data $Xf$ and $X_{V}f$, respectively, where $V$ is the outgoing conic set above the arc of $\partial \mathbb{D}$ defined by $\arg{\eta} \in [0,\pi/3]$. The axes are labeled such that $\theta$ is the transport direction and $\eta$ is the position along the boundary $\partial \mathbb{D}$, with $\arg{\eta} = 0$ corresponding to the point $(1,0)$. \label{solutiondata}}
\end{figure}

\section{Approximating the Normal Operator $X_{V}^{*}X_{V}$ \label{ApproxNormal}}
After approximating the solution to the forward problem and restricting to $\partial_{+}S\Omega$ to obtain the simulated data $Xf$, we can then proceed with computing $X^{*}Xf$, and similarly, $X_{V}^{*}X_{V}f$. Recall that for any $N \in \mathbb{N}$
\begin{align*}
X_{V}f &= \chi_{V}R_{+}T_{1}^{-1}(\mathrm{Id} - KT_{1}^{-1})^{-1}Jf\\
& = \chi_{V}R_{+}T_{1}^{-1}\sum_{m=0}^{N}(KT_{1}^{-1})^{m}Jf + \chi_{V}R_{+}T_{1}^{-1}(KT_{1}^{-1})^{N+1}(\mathrm{I} - KT_{1}^{-1})^{-1}Jf.
\end{align*}
Under the condition that $\|KT_{1}^{-1}\|_{L^{2}\to L^{2}} < 1$, we then have
\begin{equation}
X_{V}^{*}  = J^{*}\left( \sum_{m=0}^{\infty}([T_{1}^{-1}]^{*}K^{*})^{m}\right)[ \chi_{V}R_{+}T_{1}^{-1}]^{*} \label{xvadjoint}
\end{equation}
To numerically compute $X_{V}^{*}$, we separately compute the three types of terms appearing in (\ref{xvadjoint}).

Given that $J:L^{2}(\Omega) \to L^{2}(\Omega \times \mathbb{S}^{n-1})$, it is straightforward to verify that
\begin{equation}
J^{*}g(x) = \int_{\mathbb{S}^{n-1}}g(x,\theta)\,d\theta. \label{jadjoint}
\end{equation}
We also compute
\begin{equation}
K^{*}g(x,\theta) = \int_{\mathbb{S}^{n-1}}k(x,\theta',\theta)g(x,\theta')\,d\theta'. \label{kadjoint}
\end{equation}
In the isotropic scattering case, this gives
\begin{equation}
K^{*}g(x,\theta) = k(x)\int_{\mathbb{S}^{n-1}}g(x,\theta')\,d\theta'.
\end{equation}
We will need the discretized version of (\ref{kadjoint}) in the same vain as (\ref{Kdiscrete}), which is given by
\begin{equation}
K_{\delta}^{*}g(x, \eta_{i}) = \delta \sum_{j=1}^{N_{d}}k(x, \eta_{j}, \eta_{i})g(x, \eta_{j}), \qquad 1\leq i \leq N_{d} \label{kadjointdiscrete}
\end{equation}

One can also verify that $T_{1}^{-1}:L^{2}(\Omega \times \mathbb{S}^{n-1}) \to  L^{2}(\Omega \times \mathbb{S}^{n-1})$ has adjoint
\begin{equation}
[T_{1}^{-1}]^{*}g(x,\theta) =\int_{0}^{\infty}\exp\left( -\int_{0}^{s}\sigma(x + \tau\theta,\theta)\,d\tau \right)g(x +s\theta,\theta)\,ds. \label{t1invadjoint}
\end{equation}
However, in order to apply $[T_{1}^{-1}]^{*}$ numerically, it is easier to use the associated first order differential equation for which it is the solution operator. We already know that $T_{1}^{-1}$ is a left inverse for $\theta \cdot \nabla + \sigma$ with the boundary condition in (\ref{transport}). So naturally $-\theta \cdot \nabla + \sigma$ is the associated differential operator for $[T_{1}^{-1}]^{*}$.

Thus we can use a first order Euler method to compute $[T_{1}^{-1}]^{*}$ just as with $T_{1}^{-1}$. Finally, we make note of a formula for $[\chi_{V}R_{+}T_{1}^{-1}]^{*}:L^{2}(\partial_{+}S\Omega, d\Sigma) \to L^{2}(\Omega \times \mathbb{S}^{n-1})$, which is easily verified to be
\begin{equation}
[\chi_{V}R_{+}T_{1}^{-1}]^{*}g(x,\theta) = g^{\#}(x,\theta)\chi_{V}^{\#}(x,\theta)E(x,\theta).
\end{equation}

Recall the truncation parameter $N_{\mathrm{scat}} \in \mathbb{N}$  corresponding to how many terms in the Neumann series $\sum_{m=0}^{\infty}([T_{1}^{-1}]^{*}K^{*})^{m}$ to use. We proceed as in Algorithm \ref{normalalg}.
\begin{algorithm}
\caption{Computing $X_{V}^{*}X_{V}f$}
\label{normalalg}
\begin{algorithmic}[1]
\State $v_{0}(x, \eta) \gets [\chi_{V}R_{+}T_{1}^{-1}]^{*}X_{V}f(x,\eta) = (X_{V}f)^{\#}(x,\eta)E(x,\eta)$
\For{$j=1$ to $N_{\mathrm{scat}}$}
\State $v_{j} \gets K_{\delta}^{*}v_{j-1}$
\Comment{Apply $K^{*}$}
\For{$i=1$ to $N_{d}$}
\State $w_{\eta_{i}}(\mathbf{x}_{N_{x}}, \mathbf{y}) = 0$
\Comment{Apply $[T^{-1}]^{*}$}
\State \textbf{solve} $-\partial_{x_{1}}w_{\eta_{i}} + \sigma_{\eta_{i}} w_{\eta_{i}} = [v_{j}]_{\eta_{i}}$ for $w_{\eta_{i}}$.
\State $v_{j}(\mathbf{x},\mathbf{y},\eta_{i}) \gets w = \left[ w_{\eta_{i}} \right]_{-\eta_{i}}(\mathbf{x},\mathbf{y})$
\EndFor
\EndFor
\State $v(\mathbf{x}, \mathbf{y}) \gets \delta \sum_{i=1}^{N_{d}}\sum_{j=1}^{N_{\mathrm{scat}}}v_{j}(\mathbf{x},\mathbf{y},\eta_{i})$
\Comment{Apply $J^{*}$}\\
\Comment{$v$ is an approximation to $X_{V}^{*}X_{V}f$}
\end{algorithmic}
\end{algorithm}

\section{Smoothness Analysis \label{smoothnessanalysis}}
When trying to recover microlocally the most singular part of the source via $X_{V}^{*}X_{V}$ as analyzed in \cite{hubenthal}, it turns out that in theory only the first term of the Neumann series is needed. This is stated in Lemma \ref{approxnormalsmooth}, for which we now provide the proof, utilizing the weakly singular integral results in Appendix \ref{ap:compsing}.
\begin{proof}[Proof of Lemma \ref{approxnormalsmooth}] Observe that
\begin{align*}
X_{V}^{*} & = [ \chi_{V}R_{+}T_{1}^{-1}(\mathrm{I} - KT_{1}^{-1})^{-1}J]^{*} \\
& = \sum_{j=0}^{m_{2}}[\chi_{V}R_{+}T_{1}^{-1}(KT_{1}^{-1})^{j}J]^{*} + [\chi_{V}R_{+}T_{1}^{-1}(KT_{1}^{-1})^{m_{2}+1}(\mathrm{I} - KT_{1}^{-1})^{-1}J]^{*}\\
X_{V} & = \chi_{V}R_{+}T_{1}^{-1}(\mathrm{I} - KT_{1}^{-1})^{-1}J\\
& = \sum_{i=0}^{m_{1}}\chi_{V}R_{+}T_{1}^{-1}(KT_{1}^{-1})^{i}J + \chi_{V}R_{+}T_{1}^{-1}(KT_{1}^{-1})^{m_{1}+1}(\mathrm{I} - KT_{1}^{-1})^{-1}J.
\end{align*}
Thus
\begin{align*}
& X_{V}^{*}X_{V}f - \sum_{j=0}^{m_{2}}\sum_{i=0}^{m_{1}}[\chi_{V}R_{+}T_{1}^{-1}(KT_{1}^{-1})^{j}J]^{*}\chi_{V}R_{+}T_{1}^{-1}(KT_{1}^{-1})^{i}Jf\\
& =\sum_{i=0}^{m_{1}}[\chi_{V}R_{+}T_{1}^{-1}(\mathrm{I} - KT_{1}^{-1})^{-1}(KT_{1}^{-1})^{m_{2}+1}J]^{*}\chi_{V}R_{+}T_{1}^{-1}(KT_{1}^{-1})^{i}Jf\\
& +  \sum_{j=0}^{m_{2}}[\chi_{V}R_{+}T_{1}^{-1}(KT_{1}^{-1})^{j}J]^{*}\chi_{V}R_{+}T_{1}^{-1}(KT_{1}^{-1})^{m_{1}+1}(\mathrm{I} - KT_{1}^{-1})^{-1}Jf\\
& + [\chi_{V}R_{+}T_{1}^{-1}(\mathrm{I} - KT_{1}^{-1})^{-1}(KT_{1}^{-1})^{m_{2}+1}J]^{*}\chi_{V}R_{+}T_{1}^{-1}(KT_{1}^{-1})^{m_{1}+1}(\mathrm{I} - KT_{1}^{-1})^{-1}Jf\\
& =: A_{1} + A_{2} + A_{3}.
\end{align*}

First note that 
\begin{align*}
& \quad [\chi_{V}R_{+}T_{1}^{-1}]^{*}\chi_{V}R_{+}T_{1}^{-1}f(x,\theta)\\
& = \left|\chi_{V}(x,\theta)\right|^{2}\int \exp\left( -\int_{s}^{\infty}\sigma(x + \tau \theta, \theta) \,d\tau \right)f(x+s\theta, \theta)\,ds.
\end{align*}
In particular, $[\chi_{V}R_{+}T_{1}^{-1}]^{*}\chi_{V}R_{+}T_{1}^{-1}$ is bounded on $H^{j}(\Omega \times \mathbb{S}^{n-1})$ for all $j$. Moreover, from the proof of Proposition 6.1 of \cite{hubenthal} we have that $[( \mathrm{Id} - KT_{1}^{-1})^{-1}]^{*}$ can be taken to be bounded on $H^{j}(\Omega \times \mathbb{S}^{n-1})$ whenever $(\mathrm{Id} - KT_{1}^{-1})^{-1}$ is, which we can assume is the case for $j \leq l + \max\{m_{1}, m_{2}\} + 1$. From the discussion in Appendix \ref{ap:compsing}, we have that
\begin{equation*}
[(KT_{1}^{-1})^{m_{2}+1}J]^{*}g(x) = \iint \frac{\beta_{m_{2} +1}(y,x,|y-x|,\widehat{y-x},\theta)}{|y-x|^{n-m}}g(y,\theta)\,dy\,d\theta.
\end{equation*}
Using a similar argument as in Lemma 2 of \cite{inversesource} and applying Proposition \ref{pseudoiterate}, we have that $[(KT_{1}^{-1})^{m_{2}+1}J]^{*}$ is bounded from $H^{l}(\Omega \times \mathbb{S}^{n-1}) \to H^{l+m_{2}+1}(\Omega \times \mathbb{S}^{n-1})$. 

Now, to analyze $A_{1}$ we write
\begin{equation*}
A_{1} = [(KT_{1}^{-1})^{m_{2}+1}J]^{*}[ (\mathrm{Id} - KT_{1}^{-1})^{-1}]^{*}[\chi_{V}R_{+}T_{1}^{-1}]^{*}\chi_{V}R_{+}T_{1}^{-1}\sum_{i=0}^{m_{1}}(KT_{1}^{-1})^{i}J,
\end{equation*}
and from the arguments of the previous paragraph it is clear that $A_{1}$ maps $H^{l}(\Omega)$ to $H^{l+m_{2}+1}(\Omega_{1})$. Likewise,
\begin{equation*}
A_{2} = \sum_{j=0}^{m_{2}}[KT_{1}^{-1})^{j}J]^{*}[\chi_{V}R_{+}T_{1}^{-1}]^{*}\chi_{V}R_{+}T_{1}^{-1}(\mathrm{I} - KT_{1}^{-1})^{-1}(KT_{1}^{-1})^{m_{1}+1}J
\end{equation*}
and so $A_{2}$ maps $H^{l}(\Omega)$ to $H^{l+m_{1}+1}(\Omega_{1})$. A similar argument shows that $A_{3}$ maps $H^{l}(\Omega)$ to $H^{l+m_{1}+m_{2}+2}(\Omega_{1})$.
\qquad \end{proof}

\section{Numerical Computations \label{examples}}
For our numerical computations, the goal is to provide visual verification of Theorem \ref{microtheorem}, and more specifically, of the related result of Lemma \ref{approxnormalsmooth}. In all examples, we use a cartesian grid of 256 by 256 with 128 directions $\theta$. We'll use the notation $(x,y)$ to denote a point in $\rn{2}$. In order to incorporate anisotropy in $k$, we use the Henyey-Greenstein (H-G) phase function \cite{henyey}, which is very commonly used in optical imaging:
\begin{equation}
p(\theta,\theta') = \left\{ \begin{array}{cc} \frac{1}{2\pi} \frac{1-g^{2}}{1 + g^{2} - 2g \theta \cdot \theta'} & n=2,\\
 \frac{1}{4\pi} \frac{1-g^2}{(1 + g^{2} - 2g\theta \cdot \theta')^{3/2}} & n=3.
\end{array}\right.
\end{equation}
To set $g =0$ corresponds to the isotropic case, while $g = 1$ and $g = -1$ correspond to forward scattering and backscattering, respectively. In typical applications $g$ is around $0.80$ to $0.95$ (\cite{hongkai}), which is characteristic of highly forward-peaked scattering. 

We also incorporate noise into the boundary data in the following way. The noiseless data $Xf$ is an $m \times 128$ matrix, with the rows corresponding to the position along the unit circle and the columns corresponding to the particles' direction. Given a parameter $\mu > 0$, for the $j$th column $v_{j}$ of $Xf$ (i.e. $[Xf]_{j}$) we compute $\|v_{j}\|_{2} = \sqrt{v_{j}^{T}v_{j}}$ and generate a vector $w$ of length $m$ with values chosen randomly according to the standard normal distribution (variance $1$ and mean $0$). We then define the $j$th column of the noisy data by
\begin{equation}
[Xf_{\mathrm{noise}}]_{j} := [Xf]_{j} + \mu \sqrt{[Xf]_{j}^{T}[Xf]_{j}}\frac{w}{\sqrt{w^{T}w}} = v_{j} + \mu \|v_{j}\|_{2} \frac{w}{\|w\|_{2}}.
\end{equation}
Thus $\mu$ represents the fraction of $\|v_{j}\|_{2}$ to which we would like to rescale the variance of the noise.

With these details in mind, for all computed examples we have taken
\begin{equation*}
k(x,y,\theta,\theta') = \frac{1}{2\pi} \chi_{\{x^2 + y^2 < 1\}}(x,y)\left(0.05 + \sin^{2}{xy}\right)\frac{1-0.85^2}{1 + 0.85^2 - 2\cdot 0.85 \cdot \theta \cdot \theta' }
\end{equation*}
and
\begin{equation*}
\sigma(x,y,\theta) = 0.5 \chi_{\{x^{2} + y^{2} < 1\}}(x,y)[0.05 + \cos^{2}{xy}]\sin^{2}{\theta}.
\end{equation*}
Moreover, using the notation of Lemma \ref{approxnormalsmooth}, we take $m_{1} = 8$ and $m_{2} = 2$. This corresponds to computing the first 9 terms of the Neumann series expansion for $Xf$ and only $3$ terms in the series representation of $X^{*}$. Finally, in all computations we take
\begin{equation}
V = V_{\mathrm{examples}} = \{(\eta,\theta) \in \partial_{+}S\Omega = \partial \mathbb{D} \times \mathbb{S}^{1} \, | \, \arg{\eta} \in [0,\pi/3] \textrm{ and } \mathbf{\eta} \cdot \mathbf{\theta} > 0\}. \label{Vexamples}
\end{equation}

\begin{figure}
\centering
\includegraphics[width=\textwidth, trim = 157 174 110 135, clip=true]{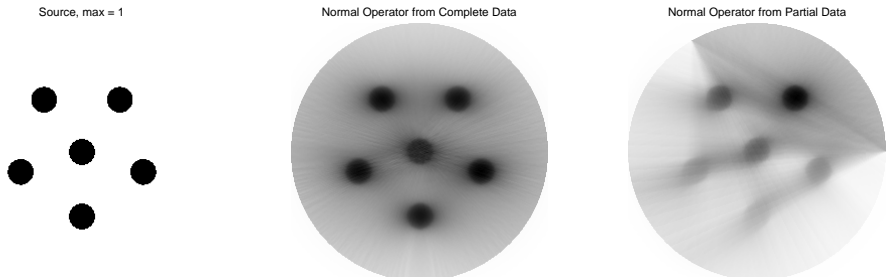}
\caption{A source consisting of circular bump functions with height $1$. The partial data is measured without noise on the set $V = V_{\mathrm{examples}}$.}\label{disksnonoise}
\end{figure}
\begin{figure}
\centering
\includegraphics[width=\textwidth, trim = 157 174 110 135, clip=true]{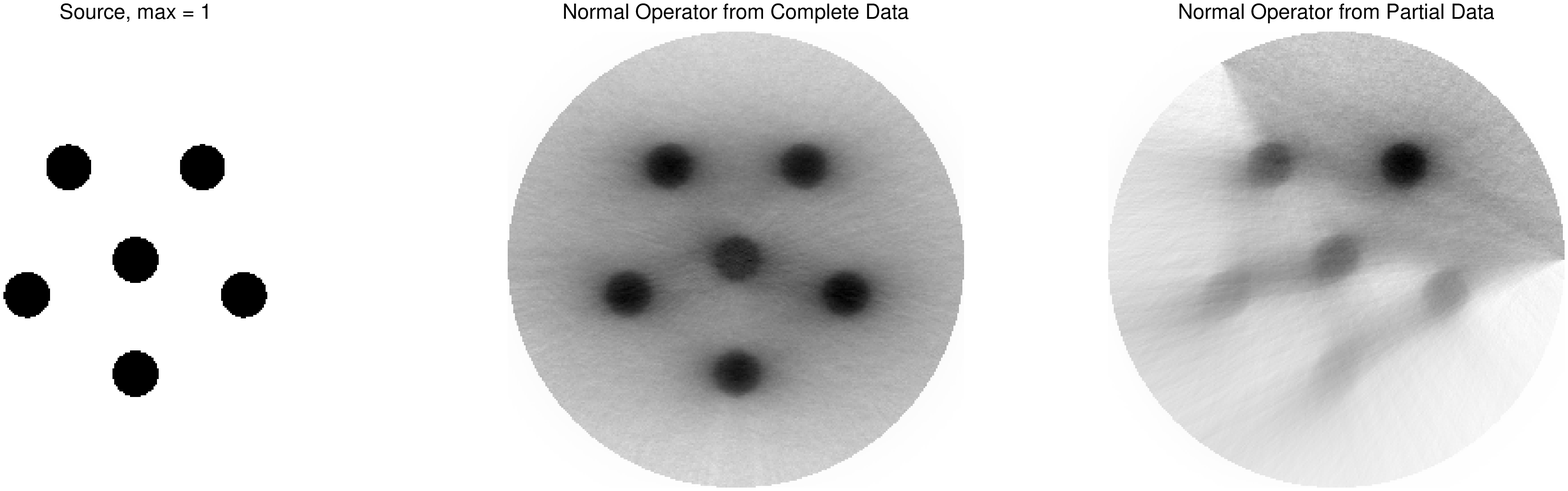}
\caption{Same as Figure \ref{disksnonoise} except with a noise coefficient $\mu = 0.50$.}\label{disksnoise}
\end{figure}
\begin{figure}
\centering
\includegraphics[width=\textwidth, trim =157 174 110 135, clip=true]{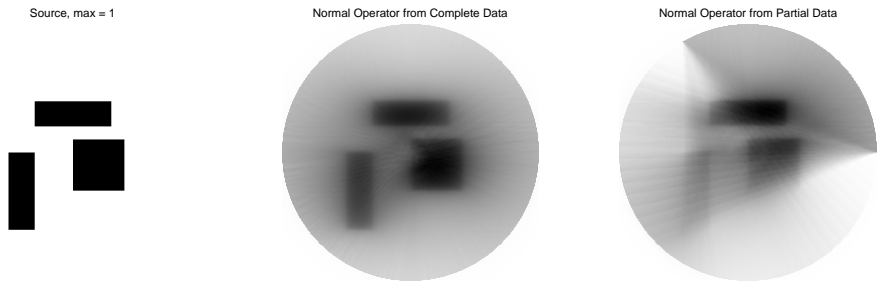}
\caption{A source consisting of rectangular bump functions with height $1$. The partial data is measured without added noise on the same set $V$ as given in Figure \ref{disksnonoise}.}\label{rectangles}
\end{figure}
\begin{figure}
\centering
\includegraphics[width=\textwidth, trim = 157 174 110 135, clip=true]{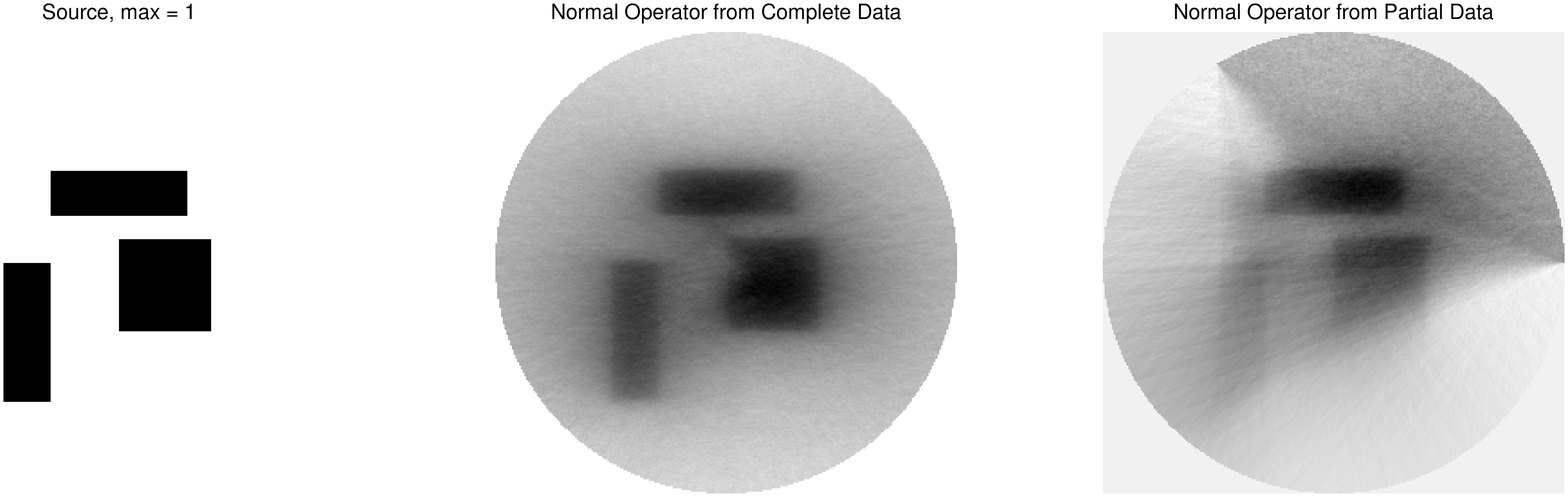}
\caption{Same as Figure \ref{rectangles} except with a noise coefficient $\mu = 0.5$.}\label{rectanglesnoise}
\end{figure}
\begin{figure}
\centering
\includegraphics[width=\textwidth, trim = 157 174 110 135, clip=true]{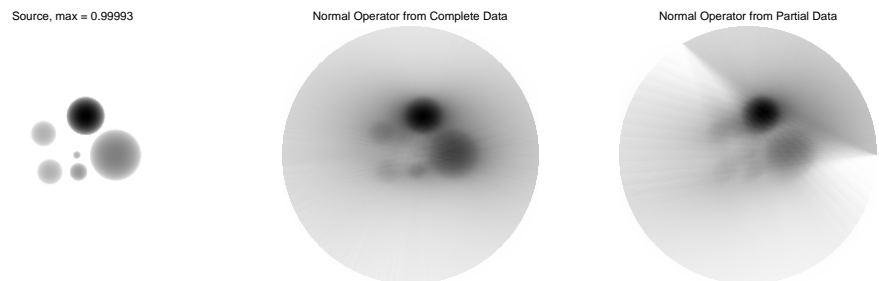}
\caption{A spiral pattern of continuous circular bump functions of the form $g(x,y) = A\sqrt{1 - \frac{1}{r^{2}}(x-x_{0})^{2} - \frac{1}{r_{0}^{2}}(y-y_{0})^{2}}$, where $A$ is the maximum height and $r$ is the radius. In this example, starting from the widest bump and moving counterclockwise, we have the set of heights and radii $A = \{0.5, 1, 0.3, 0.3, 0.4, 0.3\}$ and $r = \{ 0.2, 0.15, 0.1, 0.1, 0.07, 0.03\}$, respectively. The partial data is measured with no added noise on the set $V$ given in Figure \ref{disksnonoise}.}\label{spiral}
\end{figure}
\begin{figure}
\centering
\includegraphics[width=\textwidth, trim = 157 174 110 135, clip=true]{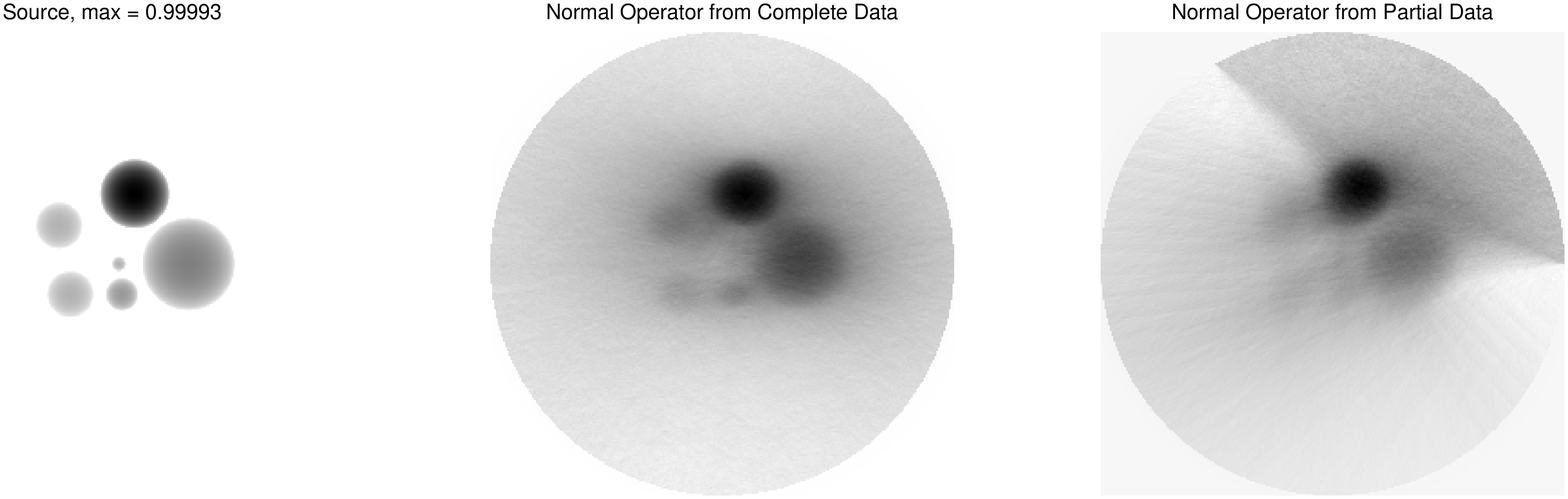}
\caption{Same as Figure \ref{spiral}, but with an added noise coefficient $\mu = 0.50$.}\label{spiralnoise}
\end{figure}
\section{Conclusions}
We have presented a numerical method to solve the direct problem for the Radiative Transfer Equation (\ref{transport}) based on the work of \cite{monard}, which utilizes the discrete Fourier transform and fractional discrete Fourier transform to implement a rotation-based method for computing the necessary line integrals. Moreover, we have computed the normal operator $X_{V}^{*}X_{V}$ in a similar manner, where $X_{V}$ is the partial data operator. Ultimately, this has given us some nice visual examples in the presence of anisotropic $(\sigma, k)$ with or without added noise, where the anisotropic parts of the parameters have physically meaningful structure as given in \cite{hongkai, henyey}. Such examples visually demonstrate the ability to detect the singularities of an unknown source with only partial data of the transport solution $u$ at the boundary.

Most importantly, the results of Figures \ref{disksnonoise} through \ref{spiralnoise} show the detection of edges resulting from singularities in the respective chosen sources $f$, and this is consistent with what is theoretically predicted in Theorem \ref{microtheorem} and with the microlocally visible set $\mathcal{M}'$ given in (\ref{microvisibleset}).  Moreover, it provides a nice generalization of similar well-known results regarding the detectable singularities in the context of limited data computed X-ray tomography, see \cite{quinto}.

\Appendix \section{Smoothing Properties of Compositions of Weakly Singular-type Integral Operators \label{ap:compsing}}
Let's consider an operator of type
\begin{equation}
[Af](x,\theta) = \int \frac{\alpha(x,y,|x-y|, \widehat{x-y},\theta)}{|x-y|^{n-1}}f\left( y, \widehat{x-y} \right)\,dy
\end{equation}
where $\alpha(x,y,r,\eta,\theta) \in C_{c}^{\infty}(\rn{n} \times \rn{n} \times \mathbb{R}_{+} \times \mathbb{S}^{n-1} \times \mathbb{S}^{n-1})$. We would like to analyze compositions of the form $[A^{m}Jf](x,\theta)$ where $J:L^{2}(\rn{n}) \to L^{2}(\rn{n} \times \mathbb{S}^{n-1})$ is the extension operator $Jf(x,\theta) = f(x)$ as used previously.

For $m = 2$, we compute
\begin{align*}
& \quad [A^{2}Jf](x,\theta)\\
& = \int \frac{\alpha(x,y_{1},|x-y_{1}|,\widehat{x-y_{1}},\theta)}{|x-y_{1}|^{n-1}}[AJf]\left(y_{1}, \frac{x-y_{1}}{|x-y_{1}|}\right)\,dy_{1}\\
& = \int \frac{\alpha(x,y_{1},|x-y_{1}|,\widehat{x-y_{1}},\theta)}{|x-y_{1}|^{n-1}} \int \frac{\alpha\left(y_{1},y_{2},|y_{1}-y_{2}|,\widehat{y_{1}-y_{2}}, \widehat{x-y_{1}}\right)}{|y_{1}-y_{2}|^{n-1}}f(y_{2})\,dy_{2}\,dy_{1}\\
& = \int \left( \int \frac{\alpha(x,y_{1},|x-y_{1}|,\widehat{x-y_{1}},\theta)\alpha\left(y_{1},y_{2},|y_{1}-y_{2}|,\widehat{y_{1}-y_{2}}, \widehat{x-y_{1}}\right)}{|x-y_{1}|^{n-1}|y_{1}-y_{2}|^{n-1}} \,dy_{1} \right)\\
& \quad \cdot  f(y_{2})\,dy_{2}.
\end{align*}
Similarly for $m=3$,
\begin{align*}
& \quad [A^{3}Jf](x,\theta)\\
& =  \int \Bigg( \iint \frac{\alpha(x,y_{1},|x-y_{1}|, \widehat{x-y_{1}},\theta)\alpha\left(y_{1},y_{2},|y_{1}-y_{2}|, \widehat{y_{1}-y_{2}}, \widehat{x-y_{1}}\right)}{|x-y_{1}|^{n-1}|y_{1}-y_{2}|^{n-1}|y_{2}-y_{3}|^{n-1}}\\
& \qquad \cdot \alpha\left(y_{2},y_{3}, |y_{2}-y_{3}|, \widehat{y_{2}-y_{3}}, \widehat{y_{1}-y_{2}}\right) \,dy_{2}\,dy_{1}\Bigg) f(y_{3})\,dy_{3}.
\end{align*}
We set $\alpha_{1} := \alpha$, and for $m \geq 2$ define
\begin{align}
& \quad \alpha_{m}(x,y_{1},\ldots, y_{m}, |x-y_{1}|, \ldots, |y_{m-1} - y_{m}|, \widehat{x-y_{1}}, \ldots, \widehat{y_{m-1}-y_{m}}, \theta)  \label{alpham}\\
& :=  \alpha_{m-1}(y_{1}, y_{2}, \ldots, y_{m}, |y_{1}-y_{2}|, \ldots, |y_{m-1}-y_{m}|, \widehat{y_{1}-y_{2}}, \ldots, \widehat{y_{m-1}-y_{m}}, \widehat{x-y_{1}})  \notag \\
& \qquad \cdot \alpha_{1}(x,y_{1},|x-y_{1}|,\widehat{x-y_{1}}, \theta).  \notag
\end{align}
For simplicity of notation, define 
\begin{align*}
\mathbf{y}_{m} & := (y_{1},\ldots, y_{m-1},y),\\
r_{m} & := (|y_{1}-y_{2}|, \ldots, |y_{m-1}-y|),\\
\widehat{\mathbf{r}_{m}} & := (\widehat{y_{1}-y_{2}}, \ldots, \widehat{y_{m-1}-y}).
\end{align*}
Then set for $m \geq 2$
\begin{equation*}
a_{m}(x,y,\theta) := \int \cdots \int \frac{\alpha_{m}(x,\mathbf{y}_{m},|x-y_{1}|, r_{m}, \widehat{x-y_{1}}, \widehat{\mathbf{r}_{m}}, \theta)}{|x-y_{1}|^{n-1}|y_{1}-y_{2}|^{n-1}\cdots |y_{m-1} - y|^{n-1}}\,dy_{1}\cdots dy_{m-1}.
\end{equation*}
Also let
\begin{equation*}
a_{0} := J[\delta(\cdot- y)], \quad a_{1}(x,y,\theta) := \frac{\alpha_{1}(x,y,|x-y|, \widehat{x-y},\theta)}{|x-y|^{n-1}}.
\end{equation*}
Then
\begin{equation*}
[A^{m}Jf](x,\theta) = \int a_{m}(x,y,\theta)f(y)\,dy, \qquad m \geq 0.
\end{equation*}

Now we would like to show that $A^{m}J$ has a kernel of the form
\begin{equation}
\frac{\beta_{m}(x,y,|x-y|, \widehat{x-y}, \theta)}{|x-y|^{n-m}} \label{pseudokernel}
\end{equation}
where $\beta_{m}(x,y,r,\eta,\theta)$ is $C^{\infty}$. Ultimately, our goal is to rigorously show for $m \geq 2$ that $A^{m}J$ is a pseudodifferential operator of order $-m$, by adapting the proof of Lemma 2 in \cite{xraygeneric}, which already directly applies to the case $m=1$.

\subsection{The Case $m=2$}
For $\eta \in \mathbb{S}^{n-1}$, define the set 
\begin{equation*}
\mathcal{D}_{\eta} := \rn{n} \setminus \left(B_{0}\left( \frac{1}{4} \right) \cup B_{\eta}\left( \frac{1}{4} \right)\right).
\end{equation*}
Also let $A_{\eta}$ be the rotational matrix such that $A_{\eta}\eta = e_{1}$, the first unit basis vector of $\rn{n}$. Let $\psi(r)$ be a smooth, even bump function such that $0 \leq \psi \leq 1$, $\psi \equiv 1$ for $|r|\leq \frac{1}{4}$, and $\psi \equiv 0$ for $|r| \geq \frac{1}{2}$. We will frequently use the notation $r = |x-y|$ and $\eta = \widehat{x-y}$. Note that differentiation of $A_{\eta}$ with respect to $x$ yields, from the fact that $\partial_{x_{j}}\widehat{x-y} = \frac{x_{j}-y_{j}}{|x-y|^{2}}$, that $|\partial_{x_{j}}A_{\eta}| \leq \frac{C}{|x-y|}$. Similarly, $l$th order derivatives of $A_{\eta}$ with respect to $x$ will have bounds of the form $\frac{C}{|x-y|^{l}}$. Of course, if we treat $A_{\eta}$ as a matrix of functions of $\eta$ only, and not implicitly dependent on $x$, then each such coordinate function is in $C^{\infty}(\mathbb{S}^{n-1})$.

We have
\begin{equation*}
a_{2}(x,y,\theta) = \int \frac{\alpha_{2}(x,y_{1},y,|x-y_{1}|,|y_{1}-y|, \widehat{x-y_{1}}, \widehat{y_{1}-y}, \theta)}{|x-y_{1}|^{n-1} |y_{1} - y|^{n-1}}\,dy_{1}.
\end{equation*}
We may then cut up the above integral, using $\psi$ and suppressing some variables, to obtain the decomposition
\begin{align*}
I_{1} + I_{2} + I_{3} & := \int \psi\left( \frac{|x-y_{1}|}{|x-y|} \right)\alpha_{2} \,dy_{1} + \int \psi\left( \frac{|y_{1}-y|}{|x-y|} \right)\alpha_{2}\,dy_{1}\\
& \quad + \int \left[ 1 - \psi\left( \frac{|x-y_{1}|}{|x-y|} \right) - \psi\left( \frac{|y_{1}-y|}{|x-y|} \right)\right]\alpha_{2}\,dy_{1}.
\end{align*}
Let's focus on the first term, $I_{1}$. Substitute $w = \frac{x-y_{1}}{|x-y|}$ so that $|x-y|^{n}dw = dy_{1}$, $y_{1} = x - |x-y|w$ and $y_{1} - y = |x-y|( \widehat{x-y} - w)$. We also note that the region of integration is $w \in B_{0}\left( \frac{1}{2} \right)$. We obtain
\begin{align}
I_{1} & = \int \frac{\psi\left( \frac{|x-y_{1}|}{|x-y|} \right)\alpha_{2}(x,y_{1},y,|x-y_{1}|, |y_{1}-y|,\widehat{x-y_{1}},\widehat{y_{1}-y}, \theta)}{|x-y_{1}|^{n-1}|y_{1} - y|^{n-1}}\,dy_{1}\notag\\
& = \frac{1}{|x-y|^{n-2}}\int \frac{\psi(|w|)}{|w|^{n-1}| \widehat{x-y} - w|^{n-1}}\notag\\
& \quad \cdot \alpha_{2}\left(x,x-|x-y|w, y, |x-y||w|, |x-y|| \widehat{x-y} - w|, \widehat{w}, \frac{\widehat{x-y}-w}{|\widehat{x-y}-w|},\theta\right)\,dw\notag\\
& =  \frac{1}{r^{n-2}}\int_{B_{0}\left( \frac{1}{2} \right)} \frac{\psi(|w|)}{|w|^{n-1}| \eta - w|^{n-1}}\notag\\
& \quad \cdot \alpha_{2}\left(x,x-|x-y|w, y, |x-y||w|, |x-y|| \eta - w|, \widehat{w}, \widehat{\eta-w},\theta\right)\,dw\notag\\
& = \frac{1}{r^{n-2}}\int_{B_{0}\left( \frac{1}{2} \right)} \frac{\psi(|w|)\alpha_{2}\left(x,x- rA_{\eta}^{-1}w, y, r|w|, r| e_{1} - w|, \widehat{w}, A_{\eta}^{-1}\widehat{e_{1}-w},\theta\right)}{|w|^{n-1}| e_{1} - w|^{n-1}}\,dw.
\end{align}

Let's consider the other terms:
\begin{align}
I_{2} & = \int \frac{\psi\left( \frac{|y_{1}-y|}{|x-y|} \right)\alpha_{2}(x,y_{1},y,|x-y_{1}|, |y_{1}-y|,\widehat{x-y_{1}},\widehat{y_{1}-y}, \theta)}{|x-y_{1}|^{n-1}|y_{1} - y|^{n-1}}\,dy_{1}\notag\\
& = \frac{1}{r^{n-2}}\int_{B_{0}\left( \frac{1}{2} \right)} \frac{\psi(|\widehat{x-y} - w|)}{|w|^{n-1}| e_{1} - w|^{n-1}}\\
& \quad \cdot \alpha_{2}\left(x,x- rA_{\eta}^{-1}w, y, r|w|, r| e_{1} - w|, \widehat{w}, A_{\eta}^{-1}\widehat{e_{1}-w},\theta\right)\,dw \notag
\end{align}
and
\begin{align}
I_{3} & = \int \frac{\left(1 - \psi\left( \frac{|x-y_{1}|}{|x-y|}\right) - \psi\left( \frac{|y_{1}-y|}{|x-y|} \right)\right)}{|x-y_{1}|^{n-1}|y_{1} - y|^{n-1}}\notag\\
& \qquad \cdot  \alpha_{2}(x,y_{1},y,|x-y_{1}|, |y_{1}-y|,\widehat{x-y_{1}},\widehat{y_{1}-y}, \theta)\,dy_{1}\notag\\
& = \frac{1}{r^{n-2}} \int_{\mathcal{D}_{e_{1}}}\frac{\left(1 - \psi(|w|) - \psi( |e_{1} - w|) \right)}{|w|^{n-1}|e_{1} - w|^{n-1}}\\
& \qquad \cdot \alpha_{2}\left(x,x- rA_{\eta}^{-1}w, y, r|w|, r| e_{1} - w|, \widehat{w}, A_{\eta}^{-1}\widehat{e_{1}-w},\theta\right)\,dw.\notag
\end{align}
Notice that after multiplying each term by $r^{n-2}$, the remaining integrals are smooth in the variables $x,y,r,\eta$ and $\theta$.

\subsection{The General Case}
We seek to record a general formula for the integral representation of $A^{m}Jf(x)$ which resembles a weakly singular integral with integral singularity $\frac{1}{|x-y|^{n-m}}$. Let's make some new definitions to simplify the notation. Set $y_{0} = x$ and $y_{m} = y_{m+1} = y$, and given $1 \leq j \leq m-1$, define 
\begin{equation}
w_{j} := \frac{x-y_{j}}{|x-y_{j+1}|}, \quad w_{j}' := \frac{y_{j}-y_{j+1}}{|x - y_{j+1}|} .
\end{equation}
Note that $\eta = \widehat{x-y} = w_{m}$. Also define $w_{0} = 0$ for convenience, and then set
\begin{align}
\psi_{j}^{1}(w_{j},w_{j}') & := \psi(|w_{j}|)\notag\\
\psi_{j}^{2}(w_{j},w_{j}') & := \psi(|w_{j}'|)\notag\\
\psi_{j}^{3}(w_{j},w_{j}') & := 1 - \psi(|w_{j}|) - \psi(|w_{j}'|).
\end{align}
We notice a few useful formulas for $1 \leq j \leq m-1$:
\begin{align*}
w_{j}' & = \widehat{w_{j+1}} - w_{j},\\
y_{j} & = x - rw_{j}\prod_{l=j+1}^{m-1}|w_{l}|,\\
y_{j} - y_{j+1} & = |x-y_{j+1}|(\widehat{w_{j+1}} - w_{j}).
\end{align*}
Define
\begin{align}
\mathbf{y} & := [y_{1},\ldots, y_{m-1}, y] = \left[ \left\{  x - rw_{j}\prod_{l=j+1}^{m-1}|w_{l}| \right\}_{j=1}^{m-1}, y \right] \in \rn{m},\notag\\
|\mathbf{w}| & :=  \left[ \prod_{l=j}^{m-1}|w_{l}| \right]_{j=1}^{m-1}.
\end{align}
We can partition the integrations involved in the definition of $a_{m}(x,y,\theta)$ as
\begin{align*}
& \int \sum_{\gamma \in \{1,2,3\}^{m-1}} \left[\prod_{j=1}^{m-1}\psi_{j}^{\gamma_{j}}(w_{j},\widehat{w_{j+1}} - w_{j}) \right]\\
& \quad \cdot \frac{\alpha_{m}\left(x,\mathbf{y}, r|\mathbf{w}|, \widehat{w_{1}}, \frac{\widehat{w_{2}} - w_{1}}{|\widehat{w_{2}} -w_{1}|}, \ldots, \frac{\eta - w_{m-1}}{|\eta - w_{m-1}|}, \theta\right)}{r^{n-m}|\eta - w_{m-1}|^{n-1} \prod_{l=1}^{m-1}|w_{l}|^{n-l-1}\prod_{k=1}^{m-2}|\widehat{w_{k+1}} - w_{k}|^{n-1}}\,dw_{1}\ldots \, dw_{m-1}.
\end{align*}
For $0 \leq j \leq m-1$ define
\begin{align}
\mathcal{D}_{j}^{1} & := B_{0}\left( \frac{1}{2} \right)\notag\\
\mathcal{D}_{j}^{2} & := B_{\widehat{w_{j+1}}}\left( \frac{1}{2} \right)\notag\\
\mathcal{D}_{j}^{3} & := \rn{n} \setminus \left( B_{0}\left( \frac{1}{4} \right) \cup B_{\widehat{w_{j+1}}}\left( \frac{1}{4}\right)\right).
\end{align}
For each $\gamma \in \{1,2,3\}_{j=1}^{m-1}$, set $\mathcal{D}_{\gamma} := \prod_{j=1}^{m-1}\mathcal{D}_{j}^{\gamma_{j}}$. Then we have the integral
\begin{align}
& I_{m}(x,y,r,\eta,\theta) :=  \sum_{\gamma \in \{1,2,3\}^{m-1}} \int_{\mathcal{D}_{\gamma}}\left[\prod_{j=1}^{m-1}\psi_{j}^{\gamma_{j}}(w_{j},\widehat{w_{j+1}} - w_{j}) \right] \notag\\
& \quad \cdot \frac{\alpha_{m}\left(x,\mathbf{y}, r|\mathbf{w}|, \widehat{w_{1}}, \frac{\widehat{w_{2}} - w_{1}}{|\widehat{w_{2}} -w_{1}|}, \ldots, \frac{\eta - w_{m-1}}{|\eta - w_{m-1}|}, \theta\right)}{r^{n-m}|\eta - w_{m-1}|^{n-1} \prod_{l=1}^{m-1}|w_{l}|^{n-l-1}\prod_{k=1}^{m-2}|\widehat{w_{k+1}} - w_{k}|^{n-1}}\,dw_{1}\,\ldots \, dw_{m-1}. \label{Amkernel}
\end{align}
Note that $I_{m}(x,y,r,\eta,\theta) \vert_{r = |x-y|, \eta = \widehat{x-y}} = a_{m}(x,y,\theta)$. Assuming that $I_{m}$ is a well-defined, convergent iterated integral, since $\alpha_{m}$ depends smoothly on $r$, it follows that $I_{m}$ is smooth in $r$ upon multiplication by $r^{n-m}$. Also note that since $x$ and $y$ both appear separately in the evaluation of $\alpha_{m}$, $\alpha_{m}$ is compactly supported in $x$, and we will ultimately be applying the given integral operator to functions $f$ supported in a compact set, it follows that multiplying the integral kernel by a smooth cutoff function of $r$ will not change its behavior. Thus we may assume that $I_{m}$ is compactly supported in $r$. Smoothness in $x$, $y$ and $\theta$ is also clear, though we delay consideration of smoothness in $\eta$ until later. We define
\begin{equation}
\beta_{m}(x,y,r,\eta,\theta) := r^{n-m}I_{m}(x,y,r,\eta,\theta),
\end{equation}
so that
\begin{equation*}
A^{m}Jf(x) = \int \frac{\beta_{m}(x,y,|x-y|,\widehat{x-y},\theta)}{|x-y|^{n-m}}f(y)\,dy.
\end{equation*}

In order to justify the convergence of the iterated integral in (\ref{Amkernel}), consider a specific integral summand corresponding to a choice of multi-index $\gamma$. We first note that $x$ and $y$ are restricted to $\Omega$ by assumption. Secondly, the singularities in each variable $w_{j}$ are all locally integrable since near such singularities we have local behavior like $\frac{1}{|w_{j}|^{n-j-1}}$ or $\frac{1}{|\widehat{w_{j+1}} - w_{j}|^{n-1}}$. It remains to show bounds on the terms in the sum which involve integrations over one or more of the domains $\mathcal{D}_{j}^{3}$. Let $j$ be the first index where such an integration occurs in a given term. We may integrate out the variables $w_{1}\ldots w_{j-1}$ first since the domains of integration are bounded in those cases. Since $y_{j} \in \Omega$, we have $x - rw_{j} \prod_{l=j+1}^{m-1}|w_{l}| \in \Omega$. Thus
\begin{equation}
|w_{j}| \leq \frac{\mathrm{diam}(\Omega)}{r \prod_{l=j+1}^{m-1}|w_{l}|} =: M. \label{eq:wjbound}
\end{equation}

Suppose first that $n-j-1 \neq 1$. Then using (\ref{eq:wjbound}) we may bound each integral summand in (\ref{Amkernel}) by
\begin{align*}
& \int_{\mathcal{D}_{\gamma}'}\frac{C}{r^{n-m}|\eta - w_{m-1}|^{n-1}|\widehat{w_{m-1}} - w_{m-2}|^{n-1} \cdots |\widehat{w_{j+2}} - w_{j+1}|^{n-1} }\\
& \qquad \cdot \frac{1}{\prod_{l=j+1}^{m-1}|w_{l}|^{n- l -1}} \left| \left[\frac{1}{u^{n-j-2}} \right]_{\frac{1}{4}}^{M} \right|\,dw_{j+1}\,\cdots \,dw_{m-1}\\
& \leq \left| \frac{1}{r^{j+2-m}} - 4^{n-j-2}\right|\int_{\mathcal{D}_{\gamma}'}\frac{C}{|\eta - w_{m-1}|^{n-1}|\widehat{w_{m-1}} - w_{m-2}|^{n-1} \cdots |\widehat{w_{j+2}} - w_{j+1}|^{n-1} }\\
& \qquad \cdot \frac{ \mathrm{diam}(\Omega)^{2+j-n}}{\prod_{l=j+1}^{m-1}|w_{l}|^{j+1-l}} \,dw_{j+1}\,\cdots \,dw_{m-1},
\end{align*}
where $\mathcal{D}_{\gamma}'$ is the domain obtained from $\mathcal{D}_{\gamma}$ by removing the domains of integration with respect to $w_{1}\ldots w_{j}$. If on the other hand, $n-j-1 = 1$, then one obtains an estimate involving $\ln(r)$ and $\ln{|w_{l}|}$, which also yields a suitable bound on the integral, since $\ln(|w_{l}|)|w_{l}|^{s}$ is locally integrable at $0$ for any $s > -n$. Further integrations over domains $\mathcal{D}_{j'}^{3}$ will result in similar estimates, where the bound $M$ in (\ref{eq:wjbound}) will have the same kind of dependence on $r$. By Fubini's theorem $I_{m}$ is well defined as an iterated integral for $r \neq 0$.

It remains to show smoothness of $I_{m}$ with respect to $\eta$. Notice that when $\gamma_{m-1} = 1$ or $3$ (i.e. when $w_{m-1} \in B_{0}\left( \frac{1}{2} \right)$ or $w_{m-1} \in \rn{n} \setminus \left(B_{0}\left( \frac{1}{4} \right) \cup B_{\eta}\left( \frac{1}{4} \right)\right)$), then $|\eta - w_{m-1}|$ is uniformly positive. Thus, we can differentiate under the integral sign with respect to $\eta$ the term $\frac{1}{|\eta - w_{m-1}|^{n-1}}$ infinitely many times. On the other hand, to differentiate the terms with $\gamma_{m-1} = 2$ (i.e. $w_{m-1} \in B_{\eta}\left( \frac{1}{2} \right)$), we make the substitution $z_{m-1} = \eta - w_{m-1}$ and note that $z_{m-1} \in B_{0}\left( \frac{1}{2} \right)$. Then the term $\frac{1}{|w_{m-1}|^{n-m}} = \frac{1}{|\eta - z_{m-1}|^{n-m}}$ is uniformly bounded away from $0$, so we may differentiate it arbitrarily many times with respect to $\eta$. The only other potential problem will occur if in addition, $\gamma_{m-2} = 2$ or equivalently, $w_{m-2} \in B_{\widehat{w_{m-1}}}\left( \frac{1}{2} \right) = B_{\widehat{\eta - z_{m-1}}}\left( \frac{1}{2} \right)$. In this case, we make yet another change of variables $z_{m-2} = \widehat{w_{m-1}} - w_{m-2} = \widehat{\eta - z_{m-1}} - w_{m-2}$, so that $z_{m-2} \in B_{0}\left( \frac{1}{2} \right)$ and the term $\frac{1}{|\widehat{w_{m-1}} - w_{m-2}|^{n-1}}$ does not depend on $\eta$. Similarly, the term $\frac{1}{|w_{m-2}|^{n-m+1}} = \frac{1}{|\widehat{\eta - z_{m-1}} - z_{m-2}|^{n-m+1}}$ is uniformly bounded for $z_{m-2} \in B_{0}\left( \frac{1}{2} \right)$, and so we can differentiate it arbitrarily many times with respect to $\eta$. Continuing in this way, if at any point $\gamma_{j} = 2$ we make the change of variables $z_{j} = \widehat{w_{j+1}} - w_{j}$ and obtain similar bounds. The main idea is to transfer the derivative to the numerator function $\alpha_{m}$. It is a routine matter to check that under such change of variables, differentiation of $\alpha_{m}$ with respect to $\eta$ does not result in any unbounded factors involving $\eta$ via the chain rule.

\begin{remark}By the same reasoning, if $\alpha(x,y,r,\eta,\theta)$ is in $C_{c}^{2}(\rn{n} \times \rn{n} \times \mathbb{R}_{+} \times \mathbb{S}^{n-1} \times \mathbb{S}^{n-1})$, then $\beta_{m} \in C_{c}^{2}$.\end{remark}

We now consider the following adaptation of Lemma 2 from \cite{xraygeneric}:
\begin{lemma}
Let $m \geq 1$ and let $\mathcal{A}:C_{0}(\Omega) \to C(\Omega, \mathbb{S}^{n-1})$ be the operator
\begin{equation}
\mathcal{A}f(x,\theta) = \int_{\mathbb{S}^{n-1}}\int_{\mathbb{R}}r^{m-1}\alpha(x,r,\omega, \theta)f(x+r\omega)\,dr\,d\omega,
\end{equation}
where $\alpha \in C^{\infty}(\Omega \times \mathbb{R} \times \mathbb{S}^{n-1}\times \mathbb{S}^{n-1})$. Then $\mathcal{A}$ is a classical $\Psi$DO of order $-m$ with full symbol
\begin{equation*}
a(x,\xi) \sim \sum_{k=m-1}^{\infty}a_{k}(x,\xi),
\end{equation*}
where
\begin{equation*}
a_{k}(x,\xi) = 2\pi \frac{i^{k}}{k!} \int_{\mathbb{S}^{n-1}}\partial_{r}^{k}A(x,0,\omega,\theta)\delta^{(k)}(\omega \cdot \xi)\,d\omega.
\end{equation*}\label{pseudochar}
\end{lemma}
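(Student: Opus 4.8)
The plan is to pass to the Fourier side in $x$ and exhibit $\mathcal{A}$ (with $\theta$ entering only as a smooth parameter) as a classical pseudodifferential operator whose left symbol is classical of order $-m$. Substituting $f(x+r\omega)=(2\pi)^{-n}\int e^{i(x+r\omega)\cdot\xi}\widehat{f}(\xi)\,d\xi$ and interchanging the $\xi$-integral with the $(r,\omega)$-integrals — legitimate once $\alpha$ is truncated in $r$, which is harmless since $\mathrm{supp}\,f\subset\Omega$ is bounded, so we may assume $\alpha(x,\cdot,\omega,\theta)$ is supported in $\{|r|\le\mathrm{diam}\,\Omega\}$ — one is led to
\begin{equation*}
\mathcal{A}f(x,\theta)=(2\pi)^{-n}\int_{\rn{n}}e^{ix\cdot\xi}\,a(x,\xi,\theta)\,\widehat{f}(\xi)\,d\xi,\qquad a(x,\xi,\theta):=\int_{\mathbb{S}^{n-1}}\int_{\mathbb{R}}r^{m-1}\alpha(x,r,\omega,\theta)\,e^{ir\,\omega\cdot\xi}\,dr\,d\omega.
\end{equation*}
Since $a$ is independent of $y$, it suffices to show that $a$ is a classical symbol of order $-m$ in $\xi$, smooth in $(x,\theta)$, with asymptotic expansion $a\sim\sum_{k\ge m-1}a_{k}$ as stated; note $a$ itself is a smooth function of $\xi$, the expansion being only asymptotic as $|\xi|\to\infty$.

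The heart of the argument is a scaling-and-Taylor computation. For $|\xi|\ge 1$, substituting $s=r|\xi|$ in the inner integral extracts the homogeneity,
\begin{equation*}
\int_{\mathbb{R}}r^{m-1}\alpha(x,r,\omega,\theta)\,e^{ir\,\omega\cdot\xi}\,dr=|\xi|^{-m}\int_{\mathbb{R}}s^{m-1}\,\alpha\!\left(x,\tfrac{s}{|\xi|},\omega,\theta\right)e^{is\,\omega\cdot\widehat{\xi}}\,ds,
\end{equation*}
and one Taylor-expands $\alpha(x,s/|\xi|,\omega,\theta)=\sum_{k=0}^{N}\frac{1}{k!}\partial_{r}^{k}\alpha(x,0,\omega,\theta)\,(s/|\xi|)^{k}+(s/|\xi|)^{N+1}\rho_{N}(x,s/|\xi|,\omega,\theta)$. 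The leading terms reduce to the one-dimensional distributional identity $\int_{\mathbb{R}}s^{j}e^{is\lambda}\,ds=2\pi(\pm i)^{j}\delta^{(j)}(\lambda)$ (the sign depending on the Fourier convention), which acquires meaning once the $\omega$-integral is carried out, since $0$ is a regular value of $\omega\mapsto\omega\cdot\widehat{\xi}$ on $\mathbb{S}^{n-1}$. Using $\delta^{(j)}(\omega\cdot\widehat{\xi})=|\xi|^{j+1}\delta^{(j)}(\omega\cdot\xi)$ and re-indexing by $\ell=m-1+k$, the $k$-th Taylor term becomes a function homogeneous of degree $-(\ell+1)$ in $\xi$; the Leibniz identity $\partial_{r}^{\ell}(r^{m-1}\alpha)\big|_{r=0}=\frac{\ell!}{(\ell-m+1)!}\,\partial_{r}^{\ell-m+1}\alpha(x,0,\omega,\theta)$ then identifies it exactly with $a_{\ell}$ for $A:=r^{m-1}\alpha$. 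In particular the top term $a_{m-1}$ is homogeneous of degree $-m$ and equals $2\pi(\pm i)^{m-1}\int_{\mathbb{S}^{n-1}}\alpha(x,0,\omega,\theta)\,\delta^{(m-1)}(\omega\cdot\xi)\,d\omega$, the expected principal symbol; for $m=1$ this is the familiar $2\pi\int_{\{\omega\cdot\xi=0\}}\alpha(x,0,\omega,\theta)\,dS(\omega)$.

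To close, one must show the Taylor remainder contributes a symbol of arbitrarily negative order. The clean observation is that, before rescaling, the remainder is again of the model form: it is the operator $g\mapsto\int_{\mathbb{S}^{n-1}}\int_{\mathbb{R}}r^{(m+N+1)-1}\,[\chi_{0}\rho_{N}](x,r,\omega,\theta)\,g(x+r\omega)\,dr\,d\omega$, i.e. $\mathcal{A}$ with $m$ replaced by $m+N+1$ and the $C^{\infty}$ weight $\chi_{0}\rho_{N}$; equivalently, by the computation of Appendix \ref{ap:compsing} its Schwartz kernel has the form $b(x,y,|x-y|,\widehat{y-x},\theta)/|x-y|^{n-(m+N+1)}$ with $b\in C^{\infty}$ off the diagonal and bounded, which is $C^{k}$ across the diagonal once $N\ge n-m-1+k$. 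Hence the remainder operator is smoothing by order $k$, its symbol lies in $S^{-k}$, and letting $N\to\infty$ produces the full asymptotic expansion; the uniform symbol estimates $|\partial_{x}^{\beta}\partial_{\theta}^{\gamma}\partial_{\xi}^{\delta}a|\lesssim\langle\xi\rangle^{-m-|\delta|}$ follow by differentiating under the integral sign in the rescaled expression ($\partial_{\xi}$ gains a power of $|\xi|^{-1}$, while $\partial_{x}$ and $\partial_{\theta}$ only act on the bounded factor $\alpha$). This gives $a\in S^{-m}_{\mathrm{cl}}$ and hence $\mathcal{A}\in\Psi^{-m}_{\mathrm{cl}}$ with the stated symbol.

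The main obstacle will be making that last step rigorous: tracking precisely the smoothness, hence the symbol order, of the remainder kernel as a function of $N$, and justifying the distributional $\delta^{(j)}$-manipulations by commuting them past the $\omega$-integration over the equator $\{\omega\cdot\widehat{\xi}=0\}$ (where one must check no unbounded factors in $\xi$ arise under the chain rule, just as in the smoothness-in-$\eta$ discussion of the appendix). The case $m=1$ is exactly Lemma 2 of \cite{xraygeneric}; the only real content here is to verify that the scaling argument, the Leibniz bookkeeping relating $\partial_{r}^{\ell-m+1}\alpha$ to $\partial_{r}^{\ell}(r^{m-1}\alpha)$, and the remainder estimate go through for general $m\ge 1$ — which they do with no new ideas but with more careful accounting of powers of $|x-y|$ (equivalently of $|\xi|$).
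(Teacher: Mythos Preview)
Your argument is correct, but it takes a more laborious route than the paper. The paper's proof is essentially a one-line reduction: set $A'(x,r,\omega,\theta):=r^{m-1}\alpha(x,r,\omega,\theta)$ and apply Lemma~2 of \cite{xraygeneric} (the $m=1$ case) directly to the amplitude $A'$. The only additional observation is that $\partial_{r}^{k}A'|_{r=0}=0$ for $k<m-1$, so the symbol expansion begins at $a_{m-1}$ and the operator has order $-m$ rather than $-1$. Thus the paper never reopens the Fourier analysis; it simply absorbs the factor $r^{m-1}$ into the amplitude and quotes the known result.

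By contrast, you rederive the symbol from scratch via the rescaling $s=r|\xi|$ and the distributional identity for $\int s^{j}e^{is\lambda}\,ds$, then match coefficients through the Leibniz relation $\partial_{r}^{\ell}(r^{m-1}\alpha)|_{r=0}=\tfrac{\ell!}{(\ell-m+1)!}\partial_{r}^{\ell-m+1}\alpha|_{r=0}$. Your bootstrap for the remainder (observing it is again of the model form with $m\mapsto m+N+1$) is a nice touch and gives a self-contained remainder estimate, whereas the paper simply defers to the cited lemma. What your approach buys is independence from \cite{xraygeneric} and an explicit check of the bookkeeping; what the paper's approach buys is brevity, since the entire content of the general-$m$ case is the vanishing of the low-order Taylor coefficients of $r^{m-1}\alpha$ at $r=0$.
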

\begin{proof}
The proof directly follows from the proof of the case $m=1$ in \cite{xraygeneric}. Let $A'(x,r,\omega,\theta) = r^{m-1}A(x,r,\omega, \theta)$, and note that if $A'$ is an odd function of $(r,\omega)$, then $\mathcal{A}f = 0$. So we may replace $A' = r^{m-1}A$ by
\begin{align*}
A'_{\mathrm{even}}(r,\omega) & = \frac{1}{2}(A'(r,\omega) + A'(-r,-\omega)) \\
& = \frac{1}{2}( r^{m-1}A(r,\omega) + (-r)^{m-1}A(-r,-\omega)) \\
& = \frac{r^{m-1}}{2}(A(r,\omega) + (-1)^{m-1}A(-r,-\omega)).
\end{align*}
We can then integrate over $r \geq 0$ only and double the result. Thus,
\begin{equation*}
\mathcal{A}f(x) = 2\int_{\mathbb{S}^{n-1}}\int_{0}^{\infty}A'_{\mathrm{even}}(x,r,\omega,\theta)f(x+r\omega)\,dr\,d\omega.
\end{equation*}
Changing to polar coordinates via $z = r\omega$ and setting $y = x+r\omega$, we obtain
\begin{equation*}
\mathcal{A}f(x) = 2 \int A'_{\mathrm{even}}\left( x, |y-x|, \frac{y-x}{|y-x|}, \theta \right) \frac{f(y)}{|y-x|^{n-1}}\,dy.
\end{equation*}
We then use a finite Taylor expansion of $A_{\mathrm{even}}(x,r,\omega,\theta)$ in $r$ near $r =0$ with $N > 0$ to get
\begin{equation*}
A'_{\mathrm{even}}(x,r,\omega,\theta) = \sum_{k=0}^{N-1}A'_{\mathrm{even},k}(x,\omega, \theta)r^{k} + r^{N}R_{N}(x,r,\omega, \theta).
\end{equation*}
One can compute that $2A'_{\mathrm{even},k}(x,\omega,\theta) = A'_{k}(x,\omega, \theta) + (-1)^{k}A'_{k}(x,-\omega,\theta)$, where $k!A'_{k} = \partial_{r}^{k} \vert_{r=0}A' = \partial_{r}^{k} \vert_{r=0}\left[ r^{m-1}A\right]$. Clearly, $A'_{k} = 0$ for all $k <m-1$. Therefore, following the proof  of Lemma 2 in \cite{xraygeneric}, we obtain that the terms $a_{k}(x,\xi,\theta) = 2\pi i^{k} \int_{\mathbb{S}^{n-1}}A_{k}'(x,\omega,\theta)\delta^{(k)}(\omega \cdot \xi)\,d\omega = 0$ for all $k < m-1$.\qquad \end{proof}

\begin{proposition}Let $\alpha \in C_{c}^{\infty}(\rn{n} \times \rn{n} \times \mathbb{R} \times \mathbb{S}^{n-1} \times \mathbb{S}^{n-1})$ and consider the operator $A: L^{2}(\rn{n} \times \mathbb{S}^{n-1}) \to L^{2}(\rn{n}; C^{\infty}(\mathbb{S}^{n-1}))$ defined by
\begin{equation}
[Af](x,\theta) = \int \frac{\alpha(x,y,|x-y|, \widehat{x-y}, \theta)}{|x-y|^{n-1}}f(y, \widehat{x-y})\,dy.
\end{equation}
Then for $m \geq 1$, $A^{m}J$ is a classical pseudodifferential operator of order $-m$ with smooth parameter $\theta$, where $J:L^{2}(\rn{n}) \to L^{2}(\rn{n} \times \mathbb{S}^{n-1})$ is the extension operator $Jf(x,\theta) = f(x)$.\label{pseudoiterate}\end{proposition}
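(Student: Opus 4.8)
The plan is to read off the Schwartz kernel of $A^{m}J$ from the computations already made in this appendix and then quote Lemma \ref{pseudochar}. First I would record that, for $m\geq 1$, $A^{m}J$ is the integral operator
\[
A^{m}Jf(x)=\int \frac{\beta_{m}(x,y,|x-y|,\widehat{x-y},\theta)}{|x-y|^{n-m}}\,f(y)\,dy,
\]
with $\beta_{m}=r^{n-m}I_{m}$ smooth in all of $(x,y,r,\eta,\theta)$ and compactly supported — this is the content of the kernel identity (\ref{Amkernel}) together with the convergence and smoothness discussion preceding it. Since this kernel is $C^{\infty}$ off the diagonal and $\beta_{m}$ has compact support, the contribution of $\{|x-y|\geq\delta\}$ is a smoothing operator; hence it suffices to show that the operator with kernel $\psi(|x-y|)\,|x-y|^{-(n-m)}\,\beta_{m}(x,y,|x-y|,\widehat{x-y},\theta)$, for a cutoff $\psi$ supported near $0$, is a classical $\Psi$DO of order $-m$ in $x$ depending smoothly on $\theta$.

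Next I would pass to polar coordinates about $x$: writing $y=x+r\omega$ with $r=|y-x|$, $\omega=\widehat{y-x}$, $dy=r^{n-1}\,dr\,d\omega$, the localized operator takes the form $\int_{\mathbb{S}^{n-1}}\int_{0}^{\infty}r^{m-1}b_{m}(x,r,\omega,\theta)f(x+r\omega)\,dr\,d\omega$ with $b_{m}(x,r,\omega,\theta):=\psi(r)\,\beta_{m}(x,x+r\omega,r,-\omega,\theta)\in C^{\infty}$, compactly supported in $(x,r)$ and with $\theta$ entering as a smooth parameter. The key observation — exactly as in the proof of the $m=1$ case in \cite{xraygeneric} — is that substituting $y=x+r\omega$ absorbs the separate $y$-argument of $\beta_{m}$ into the $(x,r,\omega)$-dependence of $b_{m}$. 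I would then extend $b_{m}$ smoothly in $r$ to all of $\mathbb{R}$ (multiplying by a cutoff keeps it compactly supported and alters the operator only by a smoothing term) and apply the even/odd-in-$(r,\omega)$ symmetrization used in the proof of Lemma \ref{pseudochar}; this reduces the operator, up to a smoothing remainder, to one of exactly the form treated there, with that lemma's numerator equal to the symmetrization of $b_{m}$ and its order parameter equal to our $m$. Lemma \ref{pseudochar} then gives that $A^{m}J$ is a classical $\Psi$DO of order $-m$ with $\theta$ a smooth parameter, and in fact yields the full symbol expansion $a(x,\xi,\theta)\sim\sum_{k\geq m-1}a_{k}(x,\xi,\theta)$ in the form stated there.

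I expect the real work to lie not in this reduction, which is essentially bookkeeping (the only subtle point being the half-line versus full-line integral, handled by the symmetrization), but in the appendix facts it rests on — namely that the nested singular integral (\ref{Amkernel}) is a genuinely convergent iterated integral for $r\neq 0$ and that $r^{n-m}$ times it is smooth in every variable, the angular variable $\eta$ included. The delicate configurations are the multi-indices $\gamma$ with several entries equal to $2$ and those touching the unbounded domains $\mathcal{D}_{j}^{3}$; there one needs the a priori bound (\ref{eq:wjbound}) to control the integrations over $\mathcal{D}_{j}^{3}$, together with the successive substitutions $z_{j}=\widehat{w_{j+1}}-w_{j}$ that transfer all $\eta$-derivatives onto the smooth numerator $\alpha_{m}$ without creating unbounded factors. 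A secondary point requiring care is that the gain from $|x-y|^{-(n-1)}$ to $|x-y|^{-(n-m)}$ is genuinely a consequence of the composition structure of $A$ — the angular slot of $Af$ being the slaved direction $\widehat{x-y}$ rather than a free variable — which is what forces the particular nested form (\ref{alpham}) of $\alpha_{m}$; once the representation $A^{m}Jf(x)=\int a_{m}(x,y,\theta)f(y)\,dy$ with $a_{m}$ of the stated type is in hand, the $\Psi$DO conclusion follows as above.
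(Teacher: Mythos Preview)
Your proposal is correct and follows essentially the same route as the paper: recall the kernel representation $A^{m}Jf(x,\theta)=\int |x-y|^{-(n-m)}\beta_{m}(x,y,|x-y|,\widehat{x-y},\theta)f(y)\,dy$ established earlier in the appendix, pass to polar coordinates $y=x+r\omega$ to obtain the form $\int_{\mathbb{S}^{n-1}}\int_{0}^{\infty}r^{m-1}\beta_{m}(x,x+r\omega,r,-\omega,\theta)f(x+r\omega)\,dr\,d\omega$, and invoke Lemma \ref{pseudochar}. Your additional remarks about the off-diagonal smoothing cutoff and the half-line/full-line symmetrization are reasonable elaborations but not substantively different from what the paper does (the symmetrization is already built into the proof of Lemma \ref{pseudochar}).
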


\begin{proof}Recall that
\begin{align*}
[A^{m}Jf](x,\theta) & = \int I_{m}(x,y,|x-y|,\widehat{x-y},\theta)f(y)\,dy\\
& = \int \frac{\beta_{m}(x,y,|x-y|,\widehat{x-y},\theta)}{|x-y|^{n-m}}f(y)\,dy\\
& = \int_{\mathbb{S}^{n-1}} \int_{0}^{\infty} r^{m-1}\beta_{m}(x,x-r\omega, r, \omega, \theta)f(x-r\omega)\,dr\,d\omega\\
& = \int_{\mathbb{S}^{n-1}} \int_{0}^{\infty} r^{m-1} \beta_{m}(x,x+r\omega, r, -\omega, \theta)f(x+r\omega)\,dr\,d\omega.
\end{align*}
We can then apply Lemma \ref{pseudochar} to complete proof.\qquad \end{proof}

Let us also compute the adjoint of the operator $A^{m}J$. Given functions $f \in L^{2}(\rn{n} \times \mathbb{S}^{n-1})$ and $g \in L^{2}(\rn{n})$, we have
\begin{align*}
& \langle [A^{m}J]^{*}f, g \rangle_{L^{2}(\rn{n})}\\
& = \langle f, A^{m}Jg \rangle_{L^{2}(\rn{n} \times \mathbb{S}^{n-1})}\\
& = \iiint\frac{\beta_{m}(x,y,|x-y|,\widehat{x-y},\theta)}{|x-y|^{n-m}}g(y)f(x,\theta)\,dy\,dx\,d\theta\\
& = \int g(y) \left( \int_{\rn{n}} \int_{\mathbb{S}^{n-1}} \frac{\beta_{m}(x,y,|x-y|,\widehat{x-y},\theta)}{|x-y|^{n-m}}f(x,\theta) \,dx\,d\theta \right)\,dy.
\end{align*}
Thus
\begin{equation}
[A^{m}J]^{*}f(x) = \int_{\rn{n}} \int_{\mathbb{S}^{n-1}}\frac{\beta_{m}(y,x,|y-x|,\widehat{y-x},\theta)}{|x-y|^{n-m}}f(y,\theta)\,d\theta\,dy.
\end{equation}
It is then clear that $[A^{m}J]^{*}: H^{l}(\Omega \times \mathbb{S}^{n-1}) \to H^{l+m}(\Omega)$ using a similar argument as in Lemma 2 of \cite{inversesource} together with Proposition \ref{pseudoiterate}.


\nocite{*}
\bibliographystyle{siam}
\bibliography{088507R}
\end{document}